\newtheorem{Thm}{Theorem}[section]
\newtheorem{Prop}{Proposition}[section]
\newtheorem{Lemma}{Lemma}[section]
\newtheorem{Cor}{Corollary}[section]
\theoremstyle{definition}
\newtheorem{definition}{Definition}[section]
\theoremstyle{remark}
\newtheorem{rem}{Remark}[section]
\newcommand{\milano}{Dipartimento di Matematica ``F. Enriques"
 \\ Universit\`a degli Studi di Milano \\ Via Saldini 50 \\ 20133
Milano, Italy}
\newcommand{\Pin}[1]{{\mathbb P}^{#1}}
\newcommand{\bxj}{\{\mathbf{X}_j\}}
\newcommand{\nbXi}{\mathbf{X}_i}
\begin{document}

\date{\today}
\title[Smooth Determinantal Varieties and Critical Loci]{Smooth Determinantal Varieties and Critical Loci in Multiview Geometry}
\author[M.Bertolini]{Marina Bertolini} 
\email{marina.bertolini@unimi.it} \thanks{The authors are members
of GNSAGA of INdAM}

\author[R.Notari]{Roberto Notari}
\email{roberto.notari@polimi.it}

\author[C.Turrini]{Cristina Turrini}

\email{cristina.turrini@unimi.it}

\address{\milano}
\address{Dipartimento di Matematica ``F.Brioschi", Politecnico di Milano, Piazza Leonardo da Vinci 32, 20133 Milano}

\begin{abstract}
Linear projections from $ \mathbb{P}^k $ to $ \mathbb{P}^h$ appear in computer vision as models of images of dynamic or segmented scenes. Given multiple projections of the same scene, the identification of many enough
correspondences between the images allows, in principle, to reconstruct the position of the projected objects. A critical locus for the
reconstruction problem is a variety in $ \mathbb{P}^k $ containing
the set of points for which the reconstruction fails. Critical loci turn out to be determinantal varieties. In this paper we determine and classify all the smooth critical loci, showing that they are classical projective varieties.
\end{abstract}

\keywords{Determinantal varieties, Minimal degree varieties, Multiview
Geometry, Critical loci}


\maketitle


\section{Introduction}

In this paper we classify the smooth determinantal varieties arising in the multiview geometry and  computer vision settings as {\it{critical loci}} for reconstruction problems. Since critical loci and determinantal varieties belong to different research fields, it is mandatory to explain the relation between them.

\

Photos of static three-dimensional scenes
taken from pinhole cameras are usually modelled by
linear projections from $\Pin{3}$ to $\Pin{2}$ . Similarly, in computer vision,  linear projections from
$\Pin{k}$ to $\Pin{h},$ are used to describe videos or images of
particular dynamic and segmented scenes
(\cite{Shas-Wo,Hart-Schaf,vid4}). For this reason a {\it{camera}} can be identified with a linear projection $\pi: \mathbb{P}^k \dashrightarrow \mathbb{P}^h$.

The {\it reconstruction} problem is the following: given a set of points in $ \mathbb{P}^k$ with unknown coordinates, called {\it scene}, and $n$ images of it in $n$ target spaces $\Pin{h_i}$, $i=1...n$, taken from unknown cameras, the goal is to recover the positions of cameras and
scene points in the ambient space $\Pin{k}$ .

Sufficiently many images and sufficiently many corresponding points in the given images should in principle allow
for a successful projective reconstruction, where corresponding points in the targets are images of the same point in the scene. Anyway, there exist
sets of points, in the ambient space $\Pin{k},$ for which the
projective reconstruction fails. These configurations of points
are called {\it critical}, which means that there exist other non
projectively equivalent sets of points and cameras that give the
same images in the target spaces.

Critical loci turn out to be algebraic varieties and have been studied by many authors,
indeed there is a wide literature on the subject. With analysis ad hoc,
in the classical case of projections from $ \mathbb{P}^3$ to $ \mathbb{P}^2$ \cite{buch-88,kra,May-92,Hart1,KHA,Hart-Ka,May-Shas2}, the critical loci can be twisted cubic curves \cite{buch-88}, or quadric surfaces \cite{kra, May-92}.
In the case of projections onto $ \mathbb{P}^2 $ from higher dimensional spaces, \cite{be-tur1, LAIA, tubbCHAPTER}, critical loci have been proven to be minimal degree varieties \cite{be-tur1} for one projection, or,
in a more general setting and under suitable genericity assumptions, either hypersurfaces, if the ambient space is odd dimensional, or determinantal varieties of codimension two if the ambient space is even dimensional \cite{LAIA}.

Later, in \cite{AstromKahl,  bnt1, bbnt, BM} the study of the ideal of critical loci has been formalized making use of the so-called Grassmann tensor introduced in \cite{Hart-Schaf}. A seminal case of this approach has been considered in \cite{AstromKahl}, where the authors computed the equations of the critical locus for triples of projections from $ \mathbb{P}^2$ to $ \mathbb{P}^1$. In \cite{bnt1, bbnt} the case of three projections from $ \mathbb{P}^4$ to $ \mathbb{P}^2$ is studied in detail. When the projections are general enough, critical loci are shown to be Bordiga surfaces, and conversely, every Bordiga surface is shown to be critical for suitable triples of projections. When the genericity assumptions are not fulfilled, critical loci are shown to be not irreducible, with components of different dimensions. Finally, in \cite{BM}, critical loci which are hypersurfaces in the ambient space are investigated.

\

On the other hand, the classification of embedded smooth projective varieties is a classical problem in algebraic geometry. For low degree or dimension and codimension, the classical approach to a classification problem consists in applying suitable techniques to get a finite list of possible cases and further to construct examples for the surviving cases. Determinantal varieties are quite classical varieties, whose study takes advantage of homological techniques. The seminal result in the subject is Hilbert-Burch Theorem, but it is worth mentioning the structure theorem of codimension 3 Gorenstein ideals by D. Buchsbaum and D. Eisenbud, or Buchsbaum-Rim and Eagon-Northcott complexes.

\

Under this view point, since critical loci are in the class of determinantal varieties, in this paper we approach in full generality the problem of determining which smooth determinantal varieties appear as critical loci and of classifying them. More precisely, we determine under what assumptions the critical locus for a reconstruction problem for $n$ projections from $\mathbb{P}^k$ to $\mathbb{P}^{h_i}$ for $i=1, \dots n$ is a smooth variety, and provide a complete and effective classification of smooth varieties with codimension at least $ 2 $ that can be critical.

The classification results are summarized as follows, where $ n $ is the number of projections.

\begin{itemize}
	\item $n=2$
	
		All smooth critical loci are minimal degree varieties.
		Conversely, with the only exception of Veronese surface in $\mathbb{P}^5$, every minimal degree variety embedded in $\mathbb{P}^k$, with codimension $c$, $c \leq k \leq 2c+1$, is the critical locus for suitable pairs of projections.
	
	\item $n=3$
	
	$X$ in $\mathbb{P}^k$ is a smooth critical locus if and only if $X$ is
	\begin{itemize}
	\item a cubic plane curve, in the case of triples of projections from $\mathbb{P}^2$ to $\mathbb{P}^1$, $\mathbb{P}^1$, and $\mathbb{P}^1$;
	\item a cubic surface in $\mathbb{P}^3$, in the case of triples of projections from $\mathbb{P}^3$ to $\mathbb{P}^1$, $\mathbb{P}^1$, and $\mathbb{P}^2$;
	\item a Bordiga surface in $\mathbb{P}^4$ in the case of triples of projections from $\mathbb{P}^4$ to $\mathbb{P}^2$, $\mathbb{P}^2$, and $\mathbb{P}^2$.
	\end{itemize}
	
\item $n=4$
	
	Smooth critical loci are quartic determinantal surfaces in $\mathbb{P}^3$, in the case of $4-$uples of projections from $\mathbb{P}^3$ to $\mathbb{P}^1$, $\mathbb{P}^1$, $\mathbb{P}^1$, and $\mathbb{P}^1$, containing four pairwise skew lines. Conversely, given four pairwise skew lines, it is possible to construct a smooth determinantal surface of degree $ 4$ through them that is the critical locus for a suitable reconstruction problem as above.
\end{itemize}

The plan of the paper is as follows. In section \ref{prelimsec}, we introduce the setting of multiple view geometry and we recall the construction of the Grassmann tensor. In section \ref{crit_loc}, we introduce critical loci and determine the generators of their  ideals, showing in particular that critical loci are determinantal varieties. In section \ref{numerical}, we give some numerical bounds for critical loci to be smooth, and in particular we show that a critical locus can be smooth only if the number $n$ of projections is at most $ 4$. The remaining sections \ref{n2}, \ref{n3}, \ref{n4} are devoted to the study of critical loci in the cases $n=2, n=3, n=4$, respectively.


\section{On multiview Geometry and Grassmann tensors}
\label{prelimsec}

In this section we fix notation and terminology and give a short
overview of classical facts in Computer Vision related to the
problem of projective reconstruction of scenes and cameras from
multiple views.

In this context, a {\it camera} $P$ is a linear projection from
$\Pin{k}$ onto $\Pin{h},$  from a linear subspace $ C $ of
dimension $ k-h-1$,  called {\it center of projection}. The target
space $\Pin{h}$ is called {\it view}. A {\it scene} is a set of
points $ \nbXi \in \Pin{k}, i=1, \dots, N$.

Using homogeneous coordinates in $\Pin{k}$ and $\Pin{h},$ we
identify $P$ with a $(h+1) \times (k+1) $ matrix of maximal rank,
defined up to a multiplicative constant.  Hence $ C $ comes out to
be the right annihilator of $P.$

Let us consider a set of $ n $ cameras $P_j:\Pin{k}\setminus C_{j} \to
\Pin{h_j}$ projecting the same scene in $\mathbb{P}^k$ and the
corresponding set of images in the different target spaces. In
this setting, proper linear subspaces $L_i \subseteq \mathbb{P}^{h_i}, i=1
\dots n$, are said to be
\textit{corresponding} if there exists at least a point
$\mathbf{X} \in \Pin{k}$ such that $P_i(\mathbf{X})\in L_i $ for
all $i=1 \dots n$.

In the context of multiple view geometry, the problem of
{\it{projective reconstruction}} of a scene, given  multiple
images of it, is the following: given many enough scene points in
$\mathbb{P}^k$ and identified a suitable number of
{\it{corresponding subspaces}} on each image, one wants to get the
projection matrices (up to projective transformations), i.e. the
cameras, and the coordinates in $\mathbb{P}^k$ of the scene
points.

\subsection{The Grassmann tensors}
Hartley and Schaffalitzky,  \cite{Hart-Schaf}, have constructed a
set of multiview tensors, called {\it Grassmann tensors}, encoding
the relations between sets of corresponding subspaces. We recall
here the basic elements of their construction.

We consider $ n $ projections $P_j:\Pin{k}\setminus{C_j} \to
\mathbb{P}^{h_j},$ $j = 1, \dots, n,$ with centers $ C_{1}, \dots,
C_{n}$. \medskip

{\em First generality assumption}: we assume that the intersection $ C_1 \cap \dots \cap C_n $ is empty.
\medskip

Let $ L_j \subseteq \mathbb{P}^{h_j} $ be a general linear subspaces of
codimension $\alpha_j$, $j=1, \dots, n$. We say that $(L_1, \dots,
L_n)$ is a $n$--tuple of corresponding subspaces if and only if
$\overline{(P_1)^{-1}(L_1)} \cap \dots \cap
\overline{(P_n)^{-1}(L_n)}$ is not empty, where $ \overline{Y} $
is the Zariski closure of $Y$. We allow $ \alpha_j $ to be equal to $ 0 $ for some $ j$.
If this happens, the associated view does not impose any constrain
to the reconstruction problem, and so the effect of setting $
\alpha_j = 0 $ is to decrease the number of views.

We remark that the Computer Vision community uses a slightly
different definition of corresponding spaces: the spaces are said
to be corresponding if $ (P_1)^{-1}(L_1) \cap \dots \cap
(P_n)^{-1}(L_n) $ is not empty. The difference is that the centers
$ C_j $ are not considered when the inverse images are intersected
in this setting, while we prefer to include them, so to get projective varieties, and not only open subsets of them.

From the Grassmann formula, if $\sum_j \alpha_j = k+1,$ the
existence of points in the previous intersection gives a constrain
which allows us to construct the Grassmann tensor. Hartley and
Schaffalitzky call the $n$--tuple $(\alpha_1, \dots, \alpha_n)$ a
{\it profile} for the reconstruction problem. We remark that we allow $ \alpha_j = 0$, too, while, in \cite{Hart-Schaf}, $ \alpha_j \geq 1 $ for every $ j=1, \dots, n$.

Let $\{L_1, \dots, L_n\} $ be $n$ general linear subspaces as
above and let $S_j$ be the maximal rank matrix of type $(h_j+1)
\times (h_j - \alpha_j +1)$ whose columns are a basis for $L_j$.
By definition, if the $L_j$'s are corresponding subspaces, there
exists a point $\mathbf{Y} \in \Pin{k}$ such that
$P_j(\mathbf{Y})\in L_j$ for every $j$. In other words there exist
$n$ vectors $\mathbf{v_j} \in \mathbb{C}^{h_j -\alpha_j +1}$ $j =
1,\dots,n$ such that:
\begin{equation}
\label{grasssystem}
\left(%
\begin{array}{cccccc}
  P_1 & S_1 & 0 & \dots & 0 & 0 \\
  P_2 & 0 & S_2 & \dots & 0 & 0 \\
  \vdots & & & & & \vdots \\
  P_n & 0 & 0 & \dots & 0 & S_n \\
\end{array}%
\right)
\left(%
\begin{array}{c}
  \mathbf{Y} \\
  \mathbf{v_1} \\
  \mathbf{v_2} \\
  \vdots \\
  \mathbf{v_n} \\
\end{array}%
\right)=\left(%
\begin{array}{c}
  0 \\
  0 \\
  \vdots \\
  0 \\
\end{array}%
\right).
\end{equation}

The coefficient matrix $ T^{P_1, \dots, P_n}_{S_1 ,
\dots, S_n}$ is square of order $ n +
\sum h_j = k + 1 + \sum (h_j - \alpha_j + 1)$, where the left side
is the number of rows, the right side is the number of columns,
and they coincide due to our assumptions on the profile.
The existence of a non--trivial solution $(\mathbf{Y},
\mathbf{v_1},\dots,\mathbf{v_n}) $ of system (\ref{grasssystem})
implies that the determinant of $ T^{P_1, \dots, P_n}_{S_1 ,
\dots, S_n}$ is zero.

The determinant $
\mathcal{T}^{P_1, \dots, P_n}(L_1, \dots, L_n) = \det(T^{P_1, \dots, P_n}_{S_1, \dots, S_n}) $ can be thought of
as a $n$--linear form (tensor) in the Pl\"{u}cker coordinates of
the spaces $L_j$'s, in the corresponding Grassmann variety. This
tensor is called {\it Grassmann tensor}. From the above
discussion, it follows that this tensor vanishes if and only if
the linear spaces $ L_1, \dots, L_n $ are corresponding. In \cite{Hart-Schaf}, the authors show that the Grassmann tensor allows the reconstruction of the projection matrices, up to the only case when all target spaces are $ \mathbb{P}^1$. For this reason, the Computer Vision community does not consider the case above in reconstruction problems.
\medskip


\section{Critical loci and their ideals}
\label{crit_loc}
Roughly speaking, one guesses that the reconstruction problem can
be successfully solved if sufficiently many views and sufficiently
many sets of corresponding points in the given views are known.
This is generally true, but even in the classical set--up of two
projections from $\Pin{3}$ to $\Pin{2}$ one can have non
projectively equivalent pairs of scenes and
cameras that produce the same images in the view planes, thus
preventing reconstruction. Such configurations and the loci they
describe are referred to as {\it critical}. In \cite{LAIA},
critical loci for projective reconstruction of camera centers and
scene points from multiple views for projections from $\Pin{k}$ to
$\Pin{2}$ have been introduced and studied.

Now we recall the basic definition.

\begin{definition}
\label{critconfm} Given $ n $ projections $ Q_j: \mathbb{P}^k \dashrightarrow \mathbb{P}^{h_j}$,
a set of points $\{ \mathbf{X}_1, \dots, \mathbf{X}_N\}$
in $\Pin{k}$ is said to be a \textit{critical
configuration} for projective reconstruction for $ Q_1, \dots, Q_n$ if
there exists another set of $ n $ projections $P_i : \mathbb{P}^k \dashrightarrow \mathbb{P}^{h_i} $
and another set
$\{\mathbf{Y}_1, \dots, \mathbf{Y}_N \} \subset \Pin{k}$,
non-projectively equivalent to $ \{ \mathbf{X}_1, \dots,
\mathbf{X}_N \},$ such that, for all $i = 1, \dots, n$ and $j = 1,
\dots, N$, we have $P_i(\mathbf{Y}_j) = Q_i(\mathbf{X}_j)$, up to
homography in the targets. The two sets $\bxj$ and
$\{\mathbf{Y}_j\}$ are called {\it conjugate critical
configurations}, with {\it associated conjugate} projections
$\{Q_i\}$ and $\{P_i\}$.
\end{definition}

In next Proposition \ref{prop-2-1}, we prove that points in critical configurations fill an algebraic variety, called {\em critical locus}  $\mathcal{X}$, whose ideal
can be obtained by making use of the Grassmann tensor introduced
above.

Indeed, the Grassmann tensor $\mathcal{T}^{P_1,\dots,P_n}(L_1,
\dots, L_n)$ encodes the algebraic relations between corresponding
subspaces in the different views of the projections
$P_1,\dots,P_n$. Hence by definition of critical set, if
$\{\mathbf{X}_j,\mathbf{Y}_j\}$ are conjugate critical
configurations, then, for each $j$, the projections
$Q_1(\mathbf{X}_j), \dots, Q_n(\mathbf{X}_j)$ are corresponding
points not only for the projections $Q_1, \dots,Q_n,$ but for the
projections $P_1, \dots,P_n$, too.

Following the construction above, we first choose a profile
$(\alpha_1, \dots, \alpha_n)$, and a point $ \mathbf{X} $ in the
critical locus. If $ Q_i(\mathbf{X}) \in L_i$, for every $ i=1,
\dots, n$, then $\mathcal{T}^{P_1,\dots,P_n}(L_1, \dots, L_n) =
0$. The previous condition is fulfilled if $ L_i $ is spanned by $
Q_i(\mathbf{X}) $ and any other $ h_i-\alpha_i $ independent
points in $ \mathbb{P}^{h_i}$. So, we can suppose $$ S_i = \left(
\begin{array}{cccc} Q_i(\mathbf{X}) & \mathbf{x}_{i1} & \dots &
\mathbf{x}_{i,h_i-\alpha_i} \end{array} \right) = \left(
\begin{array}{cc} Q_i(\mathbf{X}) & S'_i \end{array} \right) $$
of maximal rank $ h_i-\alpha_i+1$, that is to say, $ S'_i $
is a general $ (h_i+1) \times (h_i-\alpha_i) $ matrix of rank $
h_i-\alpha_i$. Due to this choice, the matrix $ T^{P_1, \dots,
P_n}_{S_1, \dots, S_n} $ becomes
$$ T^{P_1, \dots,
P_n}_{S_1, \dots, S_n} = \left( \begin{array}{cccccccccc} P_1 &
Q_1(\mathbf{X}) & S'_1 & 0 & 0 & 0 & \dots & 0 & 0 & 0 \\ P_2 & 0
& 0 & Q_2(\mathbf{X}) & S'_2 & 0 & \dots & 0 & 0 & 0 \\ \vdots & &
& & & & & & & \vdots \\ P_n & 0 & 0 & 0 & 0 & 0 & \dots & 0 &
Q_n(\mathbf{X}) & S'_n \end{array} \right).$$ The determinant $
\det(T^{P_1,\dots, P_n}_{S_1, \dots, S_n}) $ is a sum of products of
maximal minors of $ S'_1, \dots, S'_n$, and
maximal minors of the matrix \begin{equation} \label{matrix-M}  M^{P_1, \dots,
P_n}_{Q_1, \dots, Q_n} = \left( \begin{array}{ccccccc} P_1 &
Q_1(\mathbf{X}) & 0 & 0 & \dots & 0 & 0 \\ P_2 & 0 &
Q_2(\mathbf{X}) & 0 & \dots & 0 & 0 \\ \vdots & & & & & & \vdots
\\ P_n & 0 & 0 & 0 & \dots & 0 & Q_n(\mathbf{X}) \end{array}
\right).\end{equation}
Such a matrix is
a $ (n+ \displaystyle \sum_{i=1}^n h_i) \times (n+k+1) $ matrix,
the last $n$ columns of which are of linear forms, while the first
$ k+1 $ columns are of constants.

More explicitly, if we consider $ M^{P_1, \dots,
P_n}_{Q_1, \dots, Q_n} $ as a block matrix, the coefficients are
the minors obtained by delating $ h_i-\alpha_i $ rows from the
$i$--th block $$ \left( \begin{array}{ccccccc} P_i & 0 & \dots & 0
& Q_i(\mathbf{X}) & 0 & \dots \end{array} \right),$$ for every
block.

If we allow the profile to change, because $ \mathbf{X} $ is in
the critical locus independently from the profile, we get all the
possible maximal minors of $ M^{P_1, \dots, P_n}_{Q_1, \dots,
Q_n}$. The discussion above is part of the proof of the following result.
\begin{Prop} \label{prop-2-1}
The ideal $ I(\mathcal{X})$ of the critical locus $ \mathcal{X} $
is generated by the maximal minors of $ M^{P_1, \dots, P_n}_{Q_1,
\dots, Q_n}$, and so $\mathcal{X}$ is a determinantal variety.
Moreover, $ \mathcal{X} $ contains the centers of the projections
$ Q_j$'s.
\end{Prop}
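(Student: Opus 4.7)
The proof divides into two implications (points of the critical locus vanish on the maximal minors of $M$; points where all maximal minors vanish are critical) plus the elementary verification that each center lies in $\mathcal{X}$.

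\medskip
\textbf{Forward inclusion.} The discussion preceding the statement already isolates the key geometric input: if $\mathbf{X}$ is critical with conjugate point $\mathbf{Y}$ and conjugate projections $P_1, \dots, P_n$, then for every profile $(\alpha_1, \dots, \alpha_n)$ with $\sum \alpha_j = k+1$ and every choice of general linear subspaces $L_i \subseteq \mathbb{P}^{h_i}$ containing $Q_i(\mathbf{X})$, the $L_i$'s are corresponding subspaces for the $P_i$'s as well (witnessed by $\mathbf{Y}$), so the Grassmann tensor $\mathcal{T}^{P_1,\dots,P_n}(L_1, \dots, L_n)$ vanishes. Writing each $S_i$ in the form $(Q_i(\mathbf{X})\,|\,S'_i)$ and performing Laplace expansion of $\det\bigl(T^{P_1,\dots,P_n}_{S_1,\dots,S_n}\bigr)$ along the $n$ ``center columns'' (the columns containing the $Q_i(\mathbf{X})$), one writes the determinant as a sum, each summand being the product of a maximal minor of $M^{P_1,\dots,P_n}_{Q_1,\dots,Q_n}$ with maximal minors of the $S'_i$.

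\medskip
\textbf{Separating coefficients.} The core technical step is to conclude, from the vanishing of this expansion for every admissible $S'_i$, that each individual coefficient — i.e.\ each maximal minor of $M$ produced by the chosen profile — vanishes at $\mathbf{X}$. I would do this by choosing the $S'_i$'s with sufficient genericity that the products of their maximal minors appearing in the expansion become linearly independent functions of the entries of the $S'_i$'s: since the $L_i$'s are required only to be general, one can vary the $S'_i$'s freely and isolate any one coefficient by a transversality/substitution argument. Then letting the profile $(\alpha_1, \dots, \alpha_n)$ range over all admissible $n$-tuples realises every maximal minor of $M$, since deleting $h_i - \alpha_i$ rows from the $i$-th block in $M$ is exactly the operation corresponding to the chosen profile.

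\medskip
\textbf{Reverse inclusion.} Suppose all maximal minors of $M^{P_1,\dots,P_n}_{Q_1,\dots,Q_n}$ vanish at $\mathbf{X}$. Since the matrix is ``tall'' ($n+\sum h_i$ rows, $n+k+1$ columns), this forces its rank at $\mathbf{X}$ to be strictly less than $n+k+1$, so there is a non-zero vector $(\mathbf{Y}, \lambda_1, \dots, \lambda_n)$ in the kernel. Reading off the block rows yields $P_i \mathbf{Y} + \lambda_i Q_i(\mathbf{X}) = 0$ for every $i$, i.e.\ $P_i(\mathbf{Y}) = Q_i(\mathbf{X})$ as points of $\mathbb{P}^{h_i}$; so $\mathbf{Y}$ is a conjugate of $\mathbf{X}$ and $\mathbf{X} \in \mathcal{X}$.

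\medskip
\textbf{Centers.} If $\mathbf{X} \in C_j$, then $Q_j(\mathbf{X}) = 0$, so the $j$-th ``center column'' of $M^{P_1,\dots,P_n}_{Q_1,\dots,Q_n}$ vanishes identically at $\mathbf{X}$. Every maximal minor of $M$ uses all $n+k+1$ columns, hence every maximal minor vanishes at $\mathbf{X}$, showing $C_j \subseteq \mathcal{X}$. The main obstacle in the whole argument is the coefficient-separation step in the second paragraph; the rest is essentially linear algebra.
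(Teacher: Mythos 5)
Your proposal follows essentially the same route as the paper: the forward direction via vanishing of the Grassmann tensor and the expansion of $\det(T^{P_1,\dots,P_n}_{S_1,\dots,S_n})$ into products of maximal minors of $M^{P_1,\dots,P_n}_{Q_1,\dots,Q_n}$ and of the $S'_i$, with the profile varying to realise all maximal minors, and the centers handled exactly as in the paper's (very short) formal proof. You are in fact more complete than the paper, which relegates the forward direction to the preceding discussion and does not spell out either the coefficient-separation step (vary the general $S'_i$ to isolate each minor) or the reverse inclusion (a kernel vector of the rank-deficient matrix produces the conjugate point $\mathbf{Y}$); both of your added steps are correct modulo the degenerate cases $\lambda_i=0$ or $\mathbf{Y}=0$, which land exactly on the projection centers already accounted for in the statement.
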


\begin{proof}
We only have to prove that the center $ C'_j $ of $ Q_j $ is
contained in $ \mathcal{X}$ for every $ j=1, \dots, n$. $ C'_j $
is the zero locus of $ Q_j(\mathbf{X})$, and so $ M^{P_1, \dots,
P_n}_{Q_1, \dots, Q_n} $ drops rank at every point in $ C'_j$, for
each $ j$.
\end{proof}

We remark that, if $ \alpha_i \geq 1$, then some maximal minors of
$ M^{P_1, \dots, P_n}_{Q_1, \dots, Q_n} $ do not appear in $
\det(T^{P_1,\dots, P_n}_{S_1, \dots, S_n}) $ for whatever profile, and so
they should not be among the generators of $ I(\mathcal{X})$. This
fact supports our choice to allow $ \alpha_i = 0$. \bigskip

From the first generality assumption, it follows
both that the first $ k+1 $ columns of $ M^{P_1, \dots, P_n}_{Q_1,
\dots, Q_n}$ are linearly independent, and that the linear forms
in the last $ n $ columns of the above matrix span a linear space
of dimension $ k+1 $ in $ R_1 = \left(R = K[x_0,\dots,
x_k]\right)_1$, where $ \mathbb{P}^k = \mbox{Proj}(R)$. In fact, no point is common to either the
centers of the $ P_i$'s or of the $ Q_j$'s. \bigskip

As in \cite{bnt1}, we write the matrix $ M^{P_1, \dots, P_n}_{Q_1, \dots, Q_n}$ as
the following block matrix $$ M^{P_1, \dots, P_n}_{Q_1, \dots,
Q_n} = \left( \begin{array}{cc} A & B \\ C & D \end{array} \right)
$$ where $ A $ is a $ (n -k-1+ \displaystyle \sum_{i=1}^n h_i) \times
(k+1)$ matrix, $ B $ is a $ (n -k-1+ \displaystyle \sum_{i=1}^n
h_i) \times n$ matrix, $ C $ is an order $ (k+1) $ square matrix,
and, finally, $ D $ is a $ (k+1) \times n $ matrix. We assume that
$ C $ is invertible. By performing elementary operations on
columns and rows, we can reduce $ M^{P_1, \dots, P_n}_{Q_1, \dots,
Q_n} $ to the following easier form $$ \left(
\begin{array}{cc} 0 & N_{\mathcal{X}} \\ I_{k+1} & 0 \end{array} \right)
$$ where $ N_{\mathcal{X}} = B - AC^{-1}D $ is a $ (n -k-1+ \displaystyle \sum_{i=1}^n
h_i) \times n$ matrix of linear forms. Furthermore, the maximal
minors of $ M^{P_1, \dots, P_n}_{Q_1, \dots, Q_n}$ span the same
ideal as the maximal minors of $ N_{\mathcal{X}}$. Hence, we have
the following result.
\begin{Cor} \label{N-mat-cor}
$ I(\mathcal{X}) $ is generated by the maximal minors of $
N_{\mathcal{X}} = B - A C^{-1} D$, with the same notations as
above.
\end{Cor}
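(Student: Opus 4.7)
The plan is to exhibit matrices invertible over $R$ whose left- and right-multiplication bring $M := M^{P_1, \dots, P_n}_{Q_1, \dots, Q_n}$ into the canonical block form stated in the corollary, after which the ideal of maximal minors can be read off directly. Since Proposition~\ref{prop-2-1} already identifies $I(\mathcal{X})$ with the ideal of maximal minors of $M$, the remaining task is purely linear algebra over the polynomial ring.

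I would carry out the reduction in three standard steps. First, a column operation: right-multiplying $M$ by the unimodular block matrix
\[
V = \begin{pmatrix} I_{k+1} & -C^{-1}D \\ 0 & I_n \end{pmatrix}
\]
replaces $B$ with $B - AC^{-1}D = N_{\mathcal{X}}$ and clears the lower-right block; the operation makes sense because $C^{-1}$ has scalar entries ($C$ being a constant matrix, invertible by hypothesis). Next, a row operation by the unimodular
\[
U_1 = \begin{pmatrix} I & -AC^{-1} \\ 0 & I \end{pmatrix}
\]
annihilates the upper-left block, leaving $\begin{pmatrix} 0 & N_{\mathcal{X}} \\ C & 0 \end{pmatrix}$. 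Finally, left-multiplication by $\mathrm{diag}(I, C^{-1})$ normalizes the lower-left block to $I_{k+1}$.

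The conceptual step is to observe that these three operations preserve the ideal of maximal minors. This is true because $V$, $U_1$, and $\mathrm{diag}(I, C^{-1})$ have determinants that are units in $R$ (respectively $1$, $1$, and $(\det C)^{-1} \in K^{\times}$), so Cauchy--Binet gives inclusions in both directions between the ideals of maximal minors of $M$ and of its reduced form. A direct inspection of the canonical form then closes the argument: any nonzero maximal minor must include all $k+1$ rows of the bottom block $(I_{k+1} \; 0)$, since otherwise a column of the left-hand block is identically zero, and therefore equals $\pm$ an $n \times n$ minor of $N_{\mathcal{X}}$; conversely every such minor arises this way. The only point requiring attention is that $V$ carries entries of positive degree (through $C^{-1}D$), but its strictly triangular structure keeps it unimodular, so the ideal-invariance argument applies unchanged; I do not foresee any substantive obstacle beyond this verification.
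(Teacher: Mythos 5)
Your proposal is correct and follows essentially the same route as the paper, whose justification for this corollary is exactly the preceding reduction of $M^{P_1,\dots,P_n}_{Q_1,\dots,Q_n}$ to the block form $\left(\begin{smallmatrix} 0 & N_{\mathcal{X}} \\ I_{k+1} & 0 \end{smallmatrix}\right)$ by elementary row and column operations; you merely make the unimodular matrices and the Cauchy--Binet invariance of the ideal of maximal minors explicit, which the paper leaves as a standard remark.
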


Since the critical locus $ \mathcal{X} $ is a determinantal
variety whose ideal is generated by the maximal minors of a matrix
of linear forms, the expected dimension of $ \mathcal{X} $ is
\begin{equation} \label{exp-dim}
ed_{\mathcal{X}} = k - \left(1 + (n -k-1+ \sum_{i=1}^n h_i) -
n\right) = 2k - \sum_{i=1}^n h_i.
\end{equation}
From Porteous's formula
(\cite{acgh}, formula $4.2$, p. $86$),we get, if $ \dim(\mathcal{X})
= ed_{\mathcal{X}}$,
\begin{equation} \label{exp-deg} \deg(\mathcal{X}) =
\binom{n-k-1+\sum_{i=1}^n h_i}{n-1}.
\end{equation} \bigskip

{\em Second generality assumption:} we assume projections $ P_1, \dots, P_n$, and $ Q_1, \dots, Q_n$ are general enough to guarantee that the critical locus $ \mathcal{X} $ has the expected dimension $ 2k - \sum_{i=1}^n h_i$.

From now on, every time we assert we are in the general case, we assume that both the generality assumptions hold. \medskip


\section{Numerical bounds}
\label{numerical}
In this section, we deduce both a lower bound for $ \sum h_i$, and an upper bound on the number $ n $ of views to get smooth critical loci.
\begin{Prop}
In the same notations as above, we have \begin{equation} k+1 \leq
\sum_{i=1}^n h_i. \end{equation}
\end{Prop}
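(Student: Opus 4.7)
The plan is to derive the bound directly from the determinantal description of $\mathcal{X}$ given in Corollary \ref{N-mat-cor}. Recall that $\mathcal{X}$ is cut out by the maximal minors of $N_{\mathcal{X}}$, which is an $(n - k - 1 + \sum_{i=1}^n h_i) \times n$ matrix of linear forms on $\mathbb{P}^k$. For these minors to be genuine $n \times n$ minors whose vanishing cuts out a proper determinantal variety, the matrix must have at least as many rows as columns, which is precisely the inequality $n - k - 1 + \sum h_i \geq n$, equivalent to $k+1 \leq \sum h_i$.

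Alternatively and perhaps more transparently, one can work directly with the matrix $M^{P_1, \dots, P_n}_{Q_1, \dots, Q_n}$ from \eqref{matrix-M}, which has size $(n + \sum h_i) \times (n + k + 1)$. By Proposition \ref{prop-2-1}, $\mathcal{X}$ is the locus where $M^{P_1, \dots, P_n}_{Q_1, \dots, Q_n}$ fails to have full column rank. For this locus to be a proper subvariety of $\mathbb{P}^k$ (rather than all of $\mathbb{P}^k$), the matrix must attain full column rank at a generic point, which forces the number of rows to be at least the number of columns, that is, $n + \sum h_i \geq n + k + 1$. If instead $\sum h_i \leq k$, then $M^{P_1, \dots, P_n}_{Q_1, \dots, Q_n}$ has strictly more columns than rows and thus admits a nontrivial kernel at every $\mathbf{X} \in \mathbb{P}^k$, so every point of the ambient space would be critical and $\mathcal{X} = \mathbb{P}^k$.

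A third formulation uses \eqref{exp-dim}: the expected dimension $ed_{\mathcal{X}} = 2k - \sum h_i$, which coincides with $\dim \mathcal{X}$ under the second generality assumption. Since $\mathcal{X}$ is by construction a proper subvariety of $\mathbb{P}^k$, we must have $\dim \mathcal{X} \leq k - 1$, and this inequality reduces at once to $k + 1 \leq \sum h_i$.

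There is essentially no obstacle here: the statement is a direct numerical consequence of the matrix description of $\mathcal{X}$ together with the requirement that $\mathcal{X}$ be a proper determinantal subvariety. The only point to be careful about is to invoke the correct framework (either the shape of $N_{\mathcal{X}}$, the full column rank of $M^{P_1, \dots, P_n}_{Q_1, \dots, Q_n}$, or the expected dimension formula), all of which are already in place from the previous section.
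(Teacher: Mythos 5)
Your argument is correct in substance but follows a genuinely different route from the paper. The paper's proof is a one--line consequence of the profile constraints: by definition a profile satisfies $0 \leq \alpha_i \leq h_i$ for every $i$ and $\sum_{i=1}^n \alpha_i = k+1$, so $k+1 = \sum \alpha_i \leq \sum h_i$ immediately. You instead derive the inequality from the shapes of the matrices $M^{P_1,\dots,P_n}_{Q_1,\dots,Q_n}$ and $N_{\mathcal{X}}$, together with the requirement that the critical locus be a \emph{proper} subvariety of $\mathbb{P}^k$. That reasoning is sound and arguably more illuminating geometrically (it explains \emph{why} the bound matters: below it, every point would be critical), but it carries a hypothesis the paper's proof does not need, namely that $\mathcal{X} \neq \mathbb{P}^k$; this is not literally part of the statement ``in the same notations as above,'' and your third formulation additionally invokes the second generality assumption to identify $\dim \mathcal{X}$ with the expected dimension. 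The paper's argument is preferable as a proof because the inequality is really a precondition for the Grassmann tensor construction to exist at all --- if $\sum h_i \leq k$ there is no admissible profile, so the matrix $T^{P_1,\dots,P_n}_{S_1,\dots,S_n}$ cannot be made square and the whole framework your argument rests on is unavailable. Your observation is best read as a complementary remark rather than as a replacement proof.
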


\begin{proof}
As $ 0 \leq \alpha_i \leq h_i $ for every $ i=1, \dots, n$,
and $ \sum_{i=1}^n \alpha_i = k+1$, then we have $ k + 1 \leq
\displaystyle \sum_{i=1}^n h_i$.
\end{proof}

We are interested in studying the case $ \mathcal{X} $ is irreducible and non--singular. To begin, we relate the projection centers to singular critical loci.
\begin{Lemma}
If two centers of the projections $ Q_1, \dots, Q_n $ intersect, the critical locus is singular.
\end{Lemma}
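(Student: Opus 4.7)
The plan is to produce a singular point of $\mathcal{X}$ explicitly, namely any point $\mathbf{X}_0 \in C'_i \cap C'_j$ lying in the intersection of two projection centers. First, Proposition \ref{prop-2-1} already guarantees $\mathbf{X}_0 \in \mathcal{X}$, so the only task is to show that $\mathcal{X}$ fails to be smooth there. I would work with the presentation of $I(\mathcal{X})$ from Corollary \ref{N-mat-cor}, where $\mathcal{X}$ is cut out by the maximal ($n\times n$) minors of the matrix $N_{\mathcal{X}}=B-AC^{-1}D$ of linear forms.

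The crucial structural observation is that, for each $l$, the $l$-th of the last $n$ columns of $M^{P_1,\dots,P_n}_{Q_1,\dots,Q_n}$ is supported only in the $l$-th block row, where its nonzero entries are $Q_l(\mathbf{X})$. Thus the $l$-th columns of both $B$ and $D$ vanish identically along $C'_l=\{Q_l(\mathbf{X})=0\}$, which, because $N_{\mathcal{X}}=B-AC^{-1}D$ is formed column by column, implies that the $l$-th column of $N_{\mathcal{X}}$ vanishes along $C'_l$. Applied to our point $\mathbf{X}_0\in C'_i\cap C'_j$, this means two entire columns of $N_{\mathcal{X}}$ are zero at $\mathbf{X}_0$, so
\[
\mathrm{rk}\,N_{\mathcal{X}}(\mathbf{X}_0)\leq n-2,
\]
and hence every $(n-1)\times(n-1)$ minor of $N_{\mathcal{X}}$ also vanishes at $\mathbf{X}_0$.

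From here the singularity follows by the standard Jacobian criterion for determinantal ideals. Any generator of $I(\mathcal{X})$ is the determinant of an $n\times n$ submatrix $N'$ of $N_{\mathcal{X}}$, and the partial derivative of $\det N'$ with respect to any coordinate $x_l$ is a linear combination (with constant coefficients, since entries are linear) of the signed $(n-1)\times(n-1)$ cofactors of $N'$. All such cofactors vanish at $\mathbf{X}_0$ by the previous step, so the whole Jacobian matrix of a generating set of $I(\mathcal{X})$ is zero at $\mathbf{X}_0$. Consequently the Zariski tangent space $T_{\mathbf{X}_0}\mathcal{X}$ has dimension $k$, the maximum possible. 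On the other hand, by the second generality assumption and \brref{exp-dim}, every component of $\mathcal{X}$ has dimension $2k-\sum_i h_i\leq k-1$, since $\sum_i h_i\geq k+1$. Because $\dim T_{\mathbf{X}_0}\mathcal{X}$ strictly exceeds $\dim_{\mathbf{X}_0}\mathcal{X}$, the point $\mathbf{X}_0$ is singular, proving the lemma.

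The only delicate step is the passage from $M$ to $N_{\mathcal{X}}$: one must verify that the column-wise collapse of rank along $C'_l$ survives the reduction $B-AC^{-1}D$, which is why I would write out that step explicitly using the block form $M=\left(\begin{smallmatrix}A & B\\ C & D\end{smallmatrix}\right)$ rather than relying on the reduced form alone.
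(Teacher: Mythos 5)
Your proof is correct and follows essentially the same route as the paper: the paper observes that two columns of $M^{P_1,\dots,P_n}_{Q_1,\dots,Q_n}$ vanish at a common point of two centers, so the rank drops two below maximal, and then appeals to ``generalities on determinantal varieties'' for singularity. You simply carry the vanishing columns through the reduction to $N_{\mathcal{X}}$ and make the cited generality explicit via the Jacobian criterion, which is a faithful (and more detailed) version of the same argument.
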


\begin{proof} If the centers of $ Q_1 $ and $ Q_2$, for example, have a common point, two columns of matrix (\ref{matrix-M}) vanish, and so its rank is at most $ k+n-1$. From generalities on determinantal varieties, it follows that the critical locus is singular.
\end{proof}

Now, we can compute an upper bound on the number $ n $ of views to get an associated smooth critical locus.

\begin{Thm} \label{bounds-on-n}
Let $ \mathcal{X} $ be the codimension $ c \geq 2 $ critical locus for a couple of $ n \geq 4 $ projections
$ P_1, \dots, P_n$ and $ Q_1, \dots, Q_n$ from $ \mathbb{P}^k$ to $ \mathbb{P}^{h_i}$, $ i=1, \dots, n$. Then, either $ \mathcal{X} $ is not irreducible, or is singular.
\end{Thm}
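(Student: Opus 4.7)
The plan is to argue by contradiction: I assume $\mathcal{X}$ is simultaneously irreducible and smooth, and derive a numerical inequality that fails whenever $n\geq 4$ and $c\geq 2$. The proof combines two bounds on $k$, one coming from smoothness and one from irreducibility.

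\emph{Bound from smoothness.} By the previous Lemma, smoothness of $\mathcal{X}$ forces the centers $C'_1,\dots,C'_n$ of the projections $Q_i$ to be pairwise disjoint in $\mathbb{P}^k$. Since $C'_i$ is a linear subspace of dimension $k-h_i-1$, two such subspaces can avoid each other in $\mathbb{P}^k$ only if $\dim C'_i+\dim C'_j<k$, equivalently $h_i+h_j\geq k-1$. Summing this over the $\binom{n}{2}$ pairs and using $\sum_i h_i=k+c$ (from $c=\sum_i h_i-k$) yields $(n-1)(k+c)\geq \binom{n}{2}(k-1)$, which simplifies to
$$k\;\leq\;\frac{n+2c}{n-2}.$$

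\emph{Bound from irreducibility.} By Proposition \ref{prop-2-1}, each center $C'_i$ is contained in $\mathcal{X}$. Since both $\mathcal{X}$ and $C'_i$ are irreducible, either $\dim C'_i<\dim\mathcal{X}=k-c$ or $C'_i=\mathcal{X}$. The second option is incompatible with the pairwise disjointness already established: if $C'_i=\mathcal{X}$, then every other $C'_j$ would lie inside $C'_i$ while also being disjoint from it, forcing $C'_j=\emptyset$, which cannot happen since $h_j\leq k-1$. Hence $k-h_i-1<k-c$, i.e.\ $h_i\geq c$ for every $i$. Summing gives $k+c=\sum_i h_i\geq nc$, so
$$k\;\geq\;(n-1)c.$$

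\emph{Conclusion and main obstacle.} Combining the two bounds, $(n-1)c\leq (n+2c)/(n-2)$, and clearing denominators reduces this to $(n-3)c\leq 1$. For $n\geq 4$ and $c\geq 2$ the left-hand side is at least $2$, a contradiction. The most delicate step, and the one I would be most careful about, is excluding the borderline case $C'_i=\mathcal{X}$: a pure dimension count only gives $\dim C'_i\leq \dim\mathcal{X}$, so disjointness of the centers must be invoked to upgrade this to a strict inequality and secure the lower bound $h_i\geq c$.
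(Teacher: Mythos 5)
Your proof is correct and follows essentially the same route as the paper's: both rest on the Lemma that intersecting centers force a singularity, and on the containment of the centers in $\mathcal{X}$ together with irreducibility to obtain $h_i \geq c$ for every $i$. The only real difference is in the final arithmetic --- the paper checks that the single pair of smallest centers $C_{n-1}$, $C_n$ must meet once $h_i \geq c$, whereas you sum the disjointness inequalities over all $\binom{n}{2}$ pairs --- and both yield the same contradiction for $n \geq 4$, $c \geq 2$ (your careful exclusion of the degenerate case $C'_i = \mathcal{X}$ is a point the paper glosses over, but it is handled correctly).
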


\begin{proof} The center $ C_i $ of $ Q_i $ has dimension $ k - h_i -1$. Since $ C_i \subseteq \mathcal{X}$, we have that $ \dim C_i \leq \dim \mathcal{X}$, and so $ k - h_i - 1 \leq k - c$, or equivalently, $$ h_i \geq c-1.$$

Let us assume that $ c-1 \leq h_1 \leq \dots \leq h_n$.

If $ h_1 = c-1$, then the center $ C_1 $ of $ Q_1 $ has codimension $ c $ and is contained in $ \mathcal{X}$. Then, $ \mathcal{X} $ is not irreducible.

Assume now that $ h_1 \geq c$. Since $ \dim \mathcal{X} = 2k - \sum_{i=1}^n h_i = k-c$, we get $$ k = \sum_{i=1}^n h_i -c.$$ On the other hand, the center $ C_ j $ of projection $ Q_j $ has dimension $$ \dim C_j = k - h_j -1 = \sum_{i \not= j} h_i - c -1.$$

We know that $ \mathcal{X} $ is singular if two centers meet. The two centers with smaller dimension are $ C_{n-1} $ and $ C_n$. The condition that guarantees they do not meet is $ \dim C_n + \dim C_{n-1} - k < 0$, that is to say, $$ \sum_{i =1}^{n-2} h_i < c+2.$$ Since we are in the case $ h_1 \geq c$, the left side becomes $ c(n-2) < c+2$, i.e. $ n < 3 + \frac 2c$. Hence, if $ h_1 \geq c $ and $ n \geq 4$, $ \mathcal{X} $ is singular.
\end{proof}

The same proof allows us to state also the following result.
\begin{Thm}
Let $ \mathcal{X} $ be the codimension $ 1 $ critical locus for a couple of $ n \geq 5 $ projections
$ P_1, \dots, P_n$ and $ Q_1, \dots, Q_n$ from $ \mathbb{P}^k$ to $ \mathbb{P}^{h_i}$, $ i=1, \dots, n$. Then, either $ \mathcal{X} $ is not irreducible, or is singular.
\end{Thm}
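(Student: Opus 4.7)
The plan is to reuse, essentially verbatim, the argument of Theorem \ref{bounds-on-n}, tracking how the bound $n < 3 + 2/c$ depends on the codimension $c$ and then specializing to $c=1$. In particular, I would not re-derive the setup: the containments $C_i \subseteq \mathcal{X}$ and the expected dimension formula $\dim\mathcal{X} = 2k - \sum h_i$ are the same machinery, independent of whether $c \geq 2$ or $c=1$.

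First I would note that $C_i \subseteq \mathcal{X}$ forces $k - h_i - 1 \leq k - c = k - 1$, i.e.\ $h_i \geq 0$, a vacuous condition since each target dimension $h_i$ is at least $1$. So unlike the codimension $\geq 2$ case there is no ``$h_1 = c-1$'' subcase producing a full-codimension center component inside $\mathcal{X}$ to discuss separately (or, if one wishes to allow $h_i = 0$ formally, that subcase yields a center of codimension $1$ sitting inside $\mathcal{X}$, which immediately breaks irreducibility). In all nondegenerate cases one can assume $h_1 \geq 1 = c$, and then order $h_1 \leq \dots \leq h_n$.

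Next I would specialize the dimension count. From $\dim \mathcal{X} = 2k - \sum h_i = k-1$ we get $k = \sum h_i - 1$, hence
\[
\dim C_j \;=\; k - h_j - 1 \;=\; \sum_{i \neq j} h_i - 2.
\]
The two centers of smallest dimension are $C_{n-1}$ and $C_n$. By the Lemma preceding Theorem \ref{bounds-on-n}, if these meet then $\mathcal{X}$ is singular. The nonintersection condition $\dim C_{n-1} + \dim C_n < k$ simplifies, exactly as in the previous theorem with $c=1$, to
\[
\sum_{i=1}^{n-2} h_i \;<\; c + 2 \;=\; 3.
\]
Since $h_i \geq 1$ for every $i$, the left-hand side is at least $n - 2$, so $n - 2 < 3$, i.e.\ $n < 5$. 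Consequently, for $n \geq 5$, the two smallest centers necessarily meet, and $\mathcal{X}$ is singular.

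The main obstacle is genuinely minor: it is only to verify that every step of the Theorem \ref{bounds-on-n} argument remains valid for $c = 1$ and to confirm that the threshold $n < 3 + 2/c$ really tightens to $n < 5$ at $c = 1$. The one bookkeeping point worth care is the trivial ``$h_1 = c-1$'' boundary case, which here either is absent (if one imposes $h_i \geq 1$) or, if allowed, directly yields non-irreducibility without invoking the intersection argument.
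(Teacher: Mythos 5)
Your proposal is correct and is exactly the argument the paper intends: the text introduces this statement with ``the same proof allows us to state also the following result,'' i.e.\ one reruns the proof of Theorem~\ref{bounds-on-n} with $c=1$, obtaining $\sum_{i=1}^{n-2} h_i < 3$ as the nonintersection condition for $C_{n-1}$ and $C_n$ and hence the threshold $n < 3 + 2/c = 5$. Your specialization of the dimension count and your handling of the vacuous $h_1 = c-1$ boundary case match the paper's reasoning, so there is nothing further to add.
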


Hence, we have to study the cases $ n=2 $ and $ n=3$, for every codimension $ c $ and $ n=4 $ for $ c=1$, only.


\section{The $ n = 2 $ view case}
\label{n2}

In this section, we want to prove that, under some mild assumptions, the critical locus for $ n = 2 $ is a smooth and irreducible variety of minimal degree, and, conversely, that every smooth irreducible variety of minimal degree, but the Veronese surface, is critical for a suitable couple of projections. In this way, we classify all smooth critical loci in Computer Vision for $ 2 $ views. E.g., when the codimension is $1$, the critical locus for two projections from $ \mathbb{P}^3 $ to $ \mathbb{P}^2 $ is a quadric surface, and this is well--known in the Computer Vision community; when $c=2$, the critical locus for two projections from $ \mathbb{P}^4 $ to $ \mathbb{P}^3 $ is a rational normal scroll; when $c=3$ the critical locus for two projections from $ \mathbb{P}^5 $ to $ \mathbb{P}^4 $ is either $ \mathbb{P}(\mathcal{O}_{\mathbb{P}^1}(2) \oplus \mathcal{O}_{\mathbb{P}^1}(2))$, or $ \mathbb{P}(\mathcal{O}_{\mathbb{P}^1}(1) \oplus \mathcal{O}_{\mathbb{P}^1}(3))$. \medskip

When $ n=2$, we have $ h_1 + h_2 = k + c$. Moreover, $ k > h_2 \geq h_1 \geq c+1$. We remark that the last inequality on the right is a consequence of the previous equality.

Now, we prove that the critical locus is a variety of minimal degree.
\begin{Prop} \label{critical2minimal}
In the general case, the codimension $ c $ critical loci for two
views are minimal degree varieties.
\end{Prop}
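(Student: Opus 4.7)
To prove that $\mathcal{X}$ is a minimal degree variety I must show three things: $\mathcal{X}$ is irreducible, non-degenerate, of codimension $c$, and of degree $c+1$. The codimension and degree are immediate from the machinery of Section~\ref{crit_loc}: by Corollary~\ref{N-mat-cor} specialised to $n=2$, $I(\mathcal{X})$ is generated by the $2\times 2$ minors of $N_{\mathcal{X}}$, a matrix of size $(c+1)\times 2$ (its row count being $n-k-1+h_1+h_2=1+c$) with linear-form entries. The second generality assumption gives $\dim\mathcal{X}=2k-h_1-h_2=k-c$, hence codimension $c$, and Porteous's formula~(\ref{exp-deg}) yields $\deg\mathcal{X}=\binom{c+1}{1}=c+1$, which is the degree of a minimal degree variety of codimension $c$.

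\textit{Irreducibility.} I would introduce the incidence variety
$$\tilde{\mathcal{X}}=\{(x,[v])\in \mathbb{P}^k\times\mathbb{P}^1 \colon N_{\mathcal{X}}(x)\cdot v=0\}$$
with projections $\pi_1$ and $\pi_2$ onto $\mathbb{P}^k$ and $\mathbb{P}^1$ respectively. For each $[v]\in\mathbb{P}^1$, the fibre $\pi_2^{-1}([v])$ is cut out in $\mathbb{P}^k$ by the $c+1$ entries of the column vector $N_{\mathcal{X}}\cdot v$, which by generality are linearly independent, so the fibre is a linear $\mathbb{P}^{k-c-1}$. Hence $\tilde{\mathcal{X}}$ is irreducible of dimension $k-c$, fibered by linear spaces over $\mathbb{P}^1$. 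Because a rank-one matrix in $(c+1)\times 2$ has one-dimensional kernel, the first projection $\pi_1$ is birational onto its image $\mathcal{X}$, which therefore inherits irreducibility.

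\textit{Non-degeneracy and main obstacle.} The discussion following Proposition~\ref{prop-2-1} records that the linear forms occurring in the last two columns of $M^{P_1,P_2}_{Q_1,Q_2}$ span a $(k+1)$-dimensional subspace of $R_1$, a property preserved by the row reduction that produces $N_{\mathcal{X}}$. If a non-zero linear form $\ell$ vanished on $\mathcal{X}$, it would vanish on every linear fibre $\pi_2^{-1}([v])$; specialising to $[v]=[1:0]$ and $[v]=[0:1]$ would force $\ell$ to lie in the intersection of the spans of the two columns of $N_{\mathcal{X}}$, which is trivial under the generality of the cameras, contradicting $\ell\neq 0$. The main obstacle in this plan is precisely the translation of the generality hypothesis on the cameras $P_i,Q_i$ into the concrete linear-algebra conditions used in the last two steps --- namely, linear independence of a generic column of $N_{\mathcal{X}}$ (needed for irreducibility) and triviality of the intersection of its two column-spans (needed for non-degeneracy); once these hold, the rest of the argument is essentially formal and the numerical coincidence $\deg\mathcal{X}=c+1$ finishes the proof.
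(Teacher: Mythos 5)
Your computation of the codimension and the degree agrees with the paper: the published proof consists entirely of observing that $I(\mathcal{X})$ is generated by the $2\times 2$ minors of the $(c+1)\times 2$ matrix of linear forms $N_{\mathcal{X}}$, that $\deg(\mathcal{X})=c+1$ by Porteous, and then citing the Eisenbud--Harris classification for the conclusion. Your attempt to verify irreducibility and non-degeneracy by hand is therefore more ambitious than the paper's argument, and the incidence-variety argument for irreducibility is essentially sound (the genericity claim that the entries of $N_{\mathcal{X}}\cdot v$ are independent for \emph{every} $[v]\in\mathbb{P}^1$ does hold for general cameras, since the locus of degenerate $[v]$ has expected codimension $k-c+1\geq 3$ in the relevant matrix space and the $[v]$'s trace out only a line in it).

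The non-degeneracy step, however, contains a genuine error. The spans of the entries of the two columns of $N_{\mathcal{X}}$ are $(c+1)$-dimensional subspaces of the $(k+1)$-dimensional space $R_1$, so their intersection has dimension at least $2(c+1)-(k+1)=2c+1-k$. This lower bound holds for \emph{any} choice of cameras, not just special ones, and it is positive whenever $k\leq 2c$ --- which covers most of the admissible range $c+2\leq k\leq 2c+1$ once $c\geq 2$ (for instance the cubic scroll coming from two projections from $\mathbb{P}^4$ to $\mathbb{P}^3$, where $c=2$ and $k=4$). So the claimed triviality of the intersection ``under the generality of the cameras'' is false, and specializing to the two fibres over $[1:0]$ and $[0:1]$ cannot yield a contradiction: a nonzero $\ell$ in that intersection exists but simply fails to vanish on the remaining fibres. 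To close the gap you must use that $\ell$ lies in the span of the entries of $N_{\mathcal{X}}\cdot v$ for \emph{every} $[v]\in\mathbb{P}^1$ and show that this full intersection of a one-parameter family of $(c+1)$-planes is zero for general cameras --- or, as the paper implicitly does, invoke the $1$-genericity of $N_{\mathcal{X}}$ together with the Eisenbud--Harris description of rank-one loci of such matrices, which yields irreducibility and non-degeneracy simultaneously.
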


\begin{proof} Corollary \ref{N-mat-cor} implies that the ideal $
I(\mathcal{X}) $ is generated by the maximal minors of $
N_{\mathcal{X}} $ whose type is $ (c+1) \times 2$, and so it is generated by quadrics. Furthermore, from equation (\ref{exp-deg}), we get that $ \deg(\mathcal{X}) = 1+c$. This description proves that $ \mathcal{X} \subseteq
\mathbb{P}^k $ is a minimal degree variety (see
\cite{Eisenbud-Harris}).
\end{proof}

The generality assumption in Proposition \ref{critical2minimal} implies that the minors of $ N_{\mathcal{X}} $ define variety of the expected codimension $ c$.

From the classification of minimal degree varieties in \cite{Eisenbud-Harris}, we get that $
\mathcal{X} $ is singular as soon as $ k \geq 2(c+1)$. Hence, smooth irreducible varieties of minimal degree that can be critical loci are embedded in $ \mathbb{P}^k $ for $ c+2 \leq k \leq 2c+1$.

Now, we consider the converse of Proposition \ref{critical2minimal}.
\begin{Prop}
With the only exception of Veronese surfaces in $ \mathbb{P}^5$, every codimension $ c $ minimal degree variety embedded in $ \mathbb{P}^k $ with $ c+2 \leq k \leq 2c+1$, is the critical locus for a suitable pair of projections.
\end{Prop}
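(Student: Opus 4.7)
The plan is to reverse-engineer the construction of Section~3. Given a codimension~$c$ minimal degree variety $X \subset \mathbb{P}^k$ with $c+2 \leq k \leq 2c+1$, which is not a Veronese surface, I will exhibit projections $P_1, P_2, Q_1, Q_2$ such that the associated matrix $N_{\mathcal{X}}$ of the previous corollary cuts out $X$. By the del~Pezzo--Bertini classification of varieties of minimal degree, $X$ is either a smooth quadric, or a (possibly singular) rational normal scroll $S(a_1, \dots, a_d)$ with $\sum a_i = c+1$ and $d = k - c$. In each of these cases, $X$ is defined by the $2 \times 2$ minors of a $1$-generic matrix $N$ of linear forms of shape $2 \times (c+1)$ (the catalecticant matrix in the scroll case, the $2 \times 2$ presentation of a rank~$\geq 3$ quadric in the hypersurface case). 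This is exactly the shape forced on $N_{\mathcal{X}}$ when $n = 2$, and it is the one feature that fails for the Veronese surface in $\mathbb{P}^5$, whose ideal is instead the $2 \times 2$ Pfaffians of a symmetric $3\times 3$ matrix of linear forms.

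The next step is to choose the profile. Pick integers $h_1 \leq h_2$ with $h_1+h_2 = k+c$ and $c+1 \leq h_1$, $h_2 < k$; the inequality $k \leq 2c+1$ is precisely what makes such a splitting admissible. The centers of $Q_1$ and $Q_2$ will have dimensions $k-h_1-1$ and $k-h_2-1$. I would choose these centers $C_1',C_2'$ inside $X$ as linear subspaces of the correct codimensions (fibers, or unions of consecutive fibers, of the scroll structure; lines in the smooth quadric when $c=1$). The existence of such linear subspaces inside $X$ is guaranteed away from the Veronese, and it is what enforces $Q_i(\mathbf{X})|_{C_i'} \equiv 0$, so that the rows of the matrix $M^{P_1,P_2}_{Q_1,Q_2}$ of~\brref{matrix-M} behave as required.

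Having fixed $Q_1,Q_2$, it remains to realize $N$ as $B - AC^{-1}D$ for some $P_1, P_2$. I would treat the entries of $P_1, P_2$ as free parameters and invoke a dimension count: the space of admissible $P_1,P_2$ has dimension greatly exceeding the $2(c+1)(k+1)$ entries that the normalized matrix $N_{\mathcal{X}}$ has, so a generic matching is available. One then needs to verify that a generic choice produces a matrix $N_{\mathcal{X}}$ which is $\mathrm{GL}(c+1) \times \mathrm{GL}(2)$--equivalent to the chosen $N$, so that both define the same ideal $I(X)$; for this I would use the fact that $1$-generic matrices with the same maximal-minor ideal are essentially unique up to such equivalence (Eisenbud's theorem on $1$-generic matrices). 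Finally, by construction the resulting pair $(P_1,P_2)$ is non-projectively equivalent to $(Q_1,Q_2)$ for generic parameter values, so $X$ is indeed the critical locus for the reconstruction problem determined by $\{Q_1,Q_2\}$ (with $\{P_1,P_2\}$ as conjugate projections).

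The main obstacle, as suggested above, is the realization step: translating the $1$-generic matrix $N$ representing $X$ into the very rigid block form $M^{P_1,P_2}_{Q_1,Q_2}$ dictated by the camera interpretation. The key ingredient making it work is the existence of linear subspaces of the correct dimensions contained in $X$, which is available for rational normal scrolls and smooth quadrics but not for the Veronese surface; this is both the geometric heart of the argument and the reason for the sole exception in the statement.
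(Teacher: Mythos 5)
Your overall skeleton --- representing $X$ by the $(c+1)\times 2$ (equivalently $2\times(c+1)$) matrix of linear forms from the del Pezzo--Bertini classification and then forcing $N_{\mathcal{X}}=B-AC^{-1}D$ to reproduce it --- is the paper's strategy. But the step where you actually produce the cameras is not a proof. Knowing that the parameter space of pairs $(P_1,P_2,Q_1,Q_2)$ has dimension larger than the space of $(c+1)\times 2$ matrices does not show that the map sending cameras to $N_{\mathcal{X}}$ hits the particular orbit of your chosen matrix $N$; an excess of parameters gives no surjectivity, and Eisenbud's uniqueness of $1$-generic presentations cannot repair that, since it only tells you what a successful preimage must look like, not that one exists. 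The paper closes this gap constructively: writing $AC^{-1}=(E\,\vert\,F)$, the two columns of $N_{\mathcal{X}}$ are $(Q_{11},\dots,Q_{1,c+1})^T-E\,(Q_{1,c+2},\dots,Q_{1,h_1+1})^T$ and $-F\,Q_2(X)$; one chooses $E$, $F=(-I\,\vert\,F')$ and the ``tail'' forms $Q_{1,c+2},\dots,Q_{1,h_1+1}$ and $Q_{2,c+2},\dots,Q_{2,h_2+1}$ at will, and then simply solves these identities for the remaining entries of $Q_1$ and $Q_2$ in terms of the entries $n_{ij}$ of $N$; finally a general invertible $C$ and $A=(E\,\vert\,F)C$ reassemble $M$. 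Generality of the free choices guarantees the rank conditions on $P_i$ and $Q_i(X)$, and nothing generic about the output needs to be argued, because the output is prescribed exactly.

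Second, your closing claim that the ``geometric heart'' is the existence of linear subspaces of the correct dimensions inside $X$, and that this is what excludes the Veronese surface, is wrong. For the Veronese surface in $\mathbb{P}^5$ one has $c=3$, hence $h_1=h_2=4$ and the centers are single points, which the Veronese certainly contains; so this criterion excludes nothing. The only reason for the exception is the one you stated earlier and then abandoned: the homogeneous ideal of the Veronese is generated by the $2\times 2$ minors of a symmetric $3\times 3$ matrix (not ``Pfaffians''), and it admits no $(c+1)\times 2=4\times 2$ determinantal presentation, which is the shape that two views force on $N_{\mathcal{X}}$. Relatedly, prescribing the centers in advance as fibers of the scroll adds constraints you never show to be compatible with matching the columns of $N$; in the correct construction the centers need not be chosen at all --- each column of $N_{\mathcal{X}}$ is a combination of the forms $Q_{ij}$, hence vanishes on the center of $Q_i$, so the centers land in the critical locus automatically.
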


\begin{proof} Let us consider matrix $ M $ in the case of $ 2 $ projections, and the matrix $ N_{\mathcal{X}} $ we obtain from it, as discussed in Section \ref{prelimsec}. In the $ 2 $ view case, we have $$ M = \left( \begin{array}{ccc} P_1 & Q_1(X) & 0 \\ P_2 & 0 & Q_2(X) \end{array} \right) = \left( \begin{array}{cc} A & B \\ C & D \end{array} \right),$$ where $ P_1 $ ($ P _2$, respectively) is a $ (h_1+1) \times (k+1) $ ($ (h_2+1) \times (k+1)$, respectively) full rank matrix, and, up to transposition, $ Q_1(X) = (Q_{11}(X), \dots, Q_{1,h_1+1}(X))$ and $ Q_2(X) = (Q_{21}(X), \dots, Q_{2,h_2+1}(X))$ and the linear forms in each one of them are linearly independent. Moreover, $ A $ is of type $ (c+1) \times (k+1)$, $ C $ is of type $ (k+1) \times (k+1)$, and we assume it is invertible, $ B $ is of type $ (c+1) \times 2$, and finally, $ D $ is of type $ (k+1) \times 2$. The assumption on the rank of $ C $ is always fulfilled up to collecting rows of $ M $ in a different way.

We recall that $ h_i \geq c+1$ for $ i=1,2$.

When computing $ N_{\mathcal{X}}$, we get $$ N_{\mathcal{X}} = \left( \left( \begin{array}{c} Q_{11}(X) \\ \vdots \\ Q_{1,c+1}(X) \end{array} \right) \right. \left. - E \left( \begin{array}{c} Q_{1,c+2}(X) \\ \vdots \\ Q_{1,h_1+1}(X) \end{array} \right) \right\vert \left. - F \left( \begin{array}{c} Q_{21}(X) \\ \vdots \\ Q_{2,h_2+1}(X) \end{array} \right) \right) $$ where $ A C^{-1} = (E \vert F)$. It is evident that the first column of $ N_{\mathcal{X}} $ depends on the first view, and the second from the second view.

Let us consider now a codimension $ c $ variety $ V $ of minimal degree, and let $ N $ be the $ (c+1) \times 2 $ matrix of linear forms associated to $ V $ (see \cite{Eisenbud-Harris}). To fix notation, let $ n_{ij} $ be the elements of $ N$. By comparing $ N $ and $ N_{\mathcal{X}}$, we can choose $ E $ and $ Q_{1,c+2}(X), \dots, Q_{1,h_1+1}(X) $ as general as possible, and we get $$ \left( \begin{array}{c} Q_{11}(X) \\ \vdots \\ Q_{1,c+1}(X) \end{array} \right) = \left( \begin{array}{c} n_{11} \\ \vdots \\ n_{c+1,1} \end{array} \right) + E \left( \begin{array}{c} Q_{1,c+2}(X) \\ \vdots \\ Q_{1,h_1+1}(X) \end{array} \right).$$ We choose $ F $ as $ (-I \vert F')$, $ F' $ being general, and $ Q_{2,c+2}(X), \dots, Q_{2,h_2+1}(X) $ arbitrary. Similarly to the previous case, we get $$ \left( \begin{array}{c} Q_{21}(X) \\ \vdots \\ Q_{2,c+1}(X) \end{array} \right) = \left( \begin{array}{c} n_{12} \\ \vdots \\ n_{c+1,2} \end{array} \right) + F' \left( \begin{array}{c} Q_{2,c+2}(X) \\ \vdots \\ Q_{2,h_2+1}(X) \end{array} \right).$$ Once we choose a general invertible matrix $ C $ of order $ k+1$, we compute $ A = (E \vert F) C $ and so we get the matrix $ M $ as required. We remark that the assumptions on the ranks of $ P_1, P_2, Q_1(X)$ and $ Q_2(X) $ are satisfied by the generality of the choices in the construction.
\end{proof}


\section{The $ n = 3 $ view case}
\label{n3}
In this section, we classify all smooth varieties that can be obtained as critical loci for two triples of projections.

Before approaching the problem, we briefly recall the list of codimension $ c $ smooth determinantal varieties associated to matrices of type $ (c+2) \times 3$. In the case under consideration, since there are three views, we have $ h_1 + h_2 + h_3 = k + c$. From the proof of Theorem \ref{bounds-on-n}, we know that, if a view verifies $ h_i \leq c-1$, then either the critical locus does not have codimension $ c$, or is not irreducible. Then, we can assume $ h_i \geq c $ for $ i=1,2,3$, from which we get that $ k \geq 2c$. On the other hand, a determinantal variety of codimension $ c $ as the ones we consider, is singular when embedded in $ \mathbb{P}^k $ with $ k \geq 2c+2$. Hence, smooth determinantal varieties can be critical loci for two triples of projection only if embedded in a projective space $ \mathbb{P}^k $ with $ k = 2c $ or $ k = 2c+1$. The degree of such varieties is the expected one, namely $ \deg(\mathcal{X}) = \binom{c+2}2$, as it follows from equation (\ref{exp-deg}). Thanks to classification results on smooth varieties with small invariants, in the case we are dealing with, the list of smooth varieties is complete for $ c \leq 3$. This is not a limitation for us because, as we will see in Theorem \ref{n=3-critical-loci}, smooth critical loci appear only when $ c \leq 2$.

For sake of completeness, we list the smooth determinantal varieties of degree $ \binom{c+2}2 $ for $ c \leq 3$:
\begin{enumerate}
\item[$c=1$] plane cubic curves: they can be critical for two triples of projections from $ \mathbb{P}^2 $ to $ \mathbb{P}^1$. Even if this case is not of interest for the Computer Vision community, we include it for completeness from a geometric perspective;
\item[$c=1$] cubic surfaces: they can be critical for two triples of projections from $ \mathbb{P}^3 $ to $ \mathbb{P}^1, \mathbb{P}^1, \mathbb{P}^2$;
\item[$c=2$] Bordiga surfaces: they can be critical for two triples of projections from $ \mathbb{P}^4 $ to $ \mathbb{P}^2$;
\item[$c=2$] Bordiga scrolls: they can be critical for two triples of projections from $ \mathbb{P}^5 $ to $ \mathbb{P}^2, \mathbb{P}^2, \mathbb{P}^3$;
\item[$c=3$] $3$-fold scrolls on $ \mathbb{P}^2$: they can be critical for two triples of projections from $ \mathbb{P}^6 $ to $ \mathbb{P}^3$;
\item[$c=3$] $4$-fold scrolls on $ \mathbb{P}^2$: they can be critical for two triples of projections from $ \mathbb{P}^7 $ to $ \mathbb{P}^3, \mathbb{P}^3, \mathbb{P}^4$.
\end{enumerate}

The codimension $ 1 $ cases are part of classical results on the classification of smooth hypersurfaces, the codimension $ 2 $ ones are in \cite{Io}, while the codimension $ 3 $ ones are in \cite{FL}. We briefly describe the codimension $ 2 $ and $ 3 $ varieties.

The Bordiga surface is the embedding in $ \mathbb{P}^4 $ of the blow--up of $ \mathbb{P}^2 $ at $ 10 $ general points via the linear system of plane quartics through the points. Let $ Z \subseteq \mathbb{P}^2 $ be a set of $ 10 $ general points, and $ B \subseteq \mathbb{P}^4 $ the associated Bordiga surface. The ideal sheaves $ \mathcal{I}_Z $ and $ \mathcal{I}_B $ are described from the following exact sequences: $$ 0 \to \mathcal{O}_{\mathbb{P}^2}^4(-5) \stackrel{N_Z}{\longrightarrow} \mathcal{O}_{\mathbb{P}^2}^5(-4) \to \mathcal{I}_Z \to 0 \quad \mbox{ and } \quad 0 \to \mathcal{O}_{\mathbb{P}^4}^3(-4) \stackrel{N_B}{\longrightarrow} \mathcal{O}_{\mathbb{P}^4}^4(-3) \to \mathcal{I}_B \to 0.$$ The matrices $ N_Z $ and $ N_B $ are not independent since it holds \begin{equation} \label{NZNB} (x_0, \dots, x_4) N_Z = (z_0, z_1, z_2) N_B^T \end{equation} where $ x_0, \dots, x_4 $ are coordinates in $ \mathbb{P}^4 $ and $ z_0, z_1, z_2 $ are coordinates in $ \mathbb{P}^2$.

The Bordiga scroll $ X $ is $ \mathbb{P}(\mathcal{E}) $ embedded in $ \mathbb{P}^5 $ via the tautological bundle $ \xi$, where $ \mathcal{E} $ is any rank $ 2 $ vector bundle defined by the extension $$ 0 \to \mathcal{O}_{\mathbb{P}^2} \to \mathcal{E} \to \mathcal{I}_Z(4) \to 0 $$ where $ Z \subseteq \mathbb{P}^2 $ is a set of $ 10 $ general points, as for the Bordiga surface (see \cite{ottaviani-scrolls}). By comparing the resolution of $ \mathcal{I}_Z $ and the defining extension for $ \mathcal{E}$, we get the following exact sequence $$ 0 \to \mathcal{O}_{\mathbb{P}^2}^4(-1) \stackrel{N_{\mathcal{E}}}{\longrightarrow} \mathcal{O}_{\mathbb{P}^2}^6 \to \mathcal{E} \to 0.$$ The minimal free resolution of $ X $ is $$ 0 \to \mathcal{O}_{\mathbb{P}^5}^3(-4) \stackrel{N_X}{\longrightarrow} \mathcal{O}_{\mathbb{P}^5}^4(-3) \to \mathcal{I}_X \to 0,$$ and, as for the Bordiga surface, the matrices $ N_X $ and $ N_{\mathcal{E}} $ are related in the equation $$ (x_0, \dots, x_5) N_{\mathcal{E}} = (z_0, z_1, z_2) N_X^T.$$

In the codimension $ 3 $ case, the resolutions of the two scrolls are obtained by means of the Eagon--Northcott complex, and it follows that they both have sectional genus $ 6$. In such a case, we can construct them both similarly to the case of the Bordiga scroll. We remark that, in principle, the $3$--fold scroll could be constructed as blow--up of a scroll at four double points, but it is not known whether it exists. The starting point is now a set $ Z $ of $ 15 $ general points in $ \mathbb{P}^2$, and a rank $ 2 $ vector bundle $ \mathcal{E} $ defined by the extension $$ 0 \to \mathcal{O}_{\mathbb{P}^2} \to \mathcal{E} \to \mathcal{I}_Z(5) \to 0.$$ Since the minimal free resolution of $ \mathcal{I}_Z $ is $$ 0 \to \mathcal{O}_{\mathbb{P}^2}^5(-6) \to \mathcal{O}_{\mathbb{P}^2}^6(-5) \to \mathcal{I}_Z \to 0,$$ we get the following presentation of $ \mathcal{E}$: $$ 0 \to \mathcal{O}_{\mathbb{P}^2}^5(-1) \stackrel{N_{\mathcal{E}}}{\longrightarrow} \mathcal{O}_{\mathbb{P}^2}^7 \to \mathcal{E} \to 0.$$ Let $ N_X $ be the $ 5 \times 3 $ matrix that satisfy the equation $$ (x_0, \dots, x_6) N_{\mathcal{E}} = (z_0, z_1, z_2) N_X^T.$$ Then, the defining ideal $ I(X) $ of $ X $ is generated by the $ 3 \times 3 $ minors of $ N_X$.

Since the construction of the $ 4$--fold scroll is analogous to the one of the $3$--fold scroll, we only stress the differences. This time, the vector bundle to consider is the rank $ 3 $ one defined by the extension $$ 0 \to \mathcal{O}_{\mathbb{P}^2}^2 \to \mathcal{E} \to \mathcal{I}_Z(5) \to 0,$$ where $ Z $ is a set of $ 15 $ general points as in the previous case. Then, the matrix $ N_X $ is obtained as in the previous case, but $ N_{\mathcal{E}} $ is now a matrix with type $ 8 \times 5$.

Now, we address the problem of getting the above smooth varieties as critical loci for the reconstruction problem. As we have seen above, we consider a codimension $ c$, determinantal variety $ X \subseteq \mathbb{P}^k$, with $ k = 2c$ or $ k = 2c+1$, whose defining ideal is generated by the $ 3 \times 3 $ minors of a $ (c+2) \times 3 $ matrix $ N $ of linear forms. The result is contained in the following classification Theorem.

\begin{Thm} \label{n=3-critical-loci}
$ X $ is the critical locus for two suitable triples of projections from $ \mathbb{P}^k $ if and only if either $ X \subseteq \mathbb{P}^2 $ is a cubic curve, or $ X \subseteq \mathbb{P}^3 $ is a cubic surface, or, finally, $ X \subseteq \mathbb{P}^4 $ is a Bordiga surface. In particular, $ c \leq 2$.
\end{Thm}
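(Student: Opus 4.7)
My plan is to split the biconditional into its two directions and handle them separately, relying throughout on the explicit description of $N_{\mathcal{X}}$ given by Corollary \ref{N-mat-cor}.

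For the implication that each of the plane cubic, cubic surface and Bordiga surface arises as a critical locus, I would mimic the construction in Section \ref{n2}. The profile is forced by $\sum_{i=1}^3 h_i = k+c$ together with $h_i \geq c$: it is $(1,1,1)$ for the plane cubic, $(1,1,2)$ for the cubic surface and $(2,2,2)$ for the Bordiga surface. Given the $(c+2)\times 3$ defining matrix $N$ of $X$, each column of $N_{\mathcal{X}}=B-AC^{-1}D$ is, after the reduction in Corollary \ref{N-mat-cor}, a linear combination of the $h_i+1$ entries of $Q_i(\mathbf{X})$. I therefore designate the first $c+1$ components of $Q_i(\mathbf{X})$ so as to reproduce the $i$-th column of $N$, while the remaining $h_i-c$ components, together with generic choices of $P_i$ and of the invertible block $C$, supply the freedom needed to satisfy both generality assumptions.

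For the reverse direction I must rule out the remaining smooth determinantal varieties: the Bordiga scroll in $\mathbb{P}^5$ (with forced profile $(2,2,3)$), the $3$-fold scroll in $\mathbb{P}^6$ (profile $(3,3,3)$), the $4$-fold scroll in $\mathbb{P}^7$ (profile $(3,3,4)$), and any hypothetical smooth example in codimension $c\geq 4$. The tools are Proposition \ref{prop-2-1} and Lemma \ref{numerical}: a smooth critical locus must contain the three projection centers $C_1,C_2,C_3$ as pairwise disjoint linear subspaces of the dimensions $k-h_i-1$ prescribed by the profile. For the three scroll cases, $X=\mathbb{P}(\mathcal{E})$ with $\mathcal{E}$ arising from a non-splitting extension of the form $0\to\mathcal{O}^r\to\mathcal{E}\to\mathcal{I}_Z(d)\to 0$ over $\mathbb{P}^2$, and any linear subspace of the required dimension is either a section of $\mathbb{P}(\mathcal{E})\to\mathbb{P}^2$ or the projectivization of a sub-bundle of $\mathcal{E}$. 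Demanding the stipulated number of pairwise disjoint such subspaces forces $\mathcal{E}$ to split off independent line or rank-$2$ sub-bundles, contradicting the non-splitting of the defining extension.

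The hardest step is disposing of the range $c\geq 4$, where the classification of smooth $(c+2)\times 3$ determinantal varieties is not at our disposal. I would carry the scroll argument over to the general setting by studying the rational map $\varphi\colon X\dashrightarrow\mathbb{P}^2$ defined at each smooth point by the kernel of $N^T$: the restriction of $\varphi$ to each linear center $C_i$ is a morphism $C_i\to\mathbb{P}^2$, and the requirement that the three centers be pairwise disjoint translates into rigidity/splitting constraints on these morphisms that are incompatible with the linearity of the entries of $N$. Together with the explicit classifications quoted earlier for $c\leq 3$, this yields the conclusion $c\leq 2$ and the final classification.
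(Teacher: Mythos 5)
There is a genuine gap, and it sits in the direction you yourself flag as the hardest. Your plan for ruling out the Bordiga scroll, the two higher scrolls, and all of $c\geq 4$ rests on the claim that the obstruction is the existence of pairwise disjoint linear centers $C_1,C_2,C_3\subseteq X$ of dimensions $k-h_i-1$. But for $n=3$ with $h_i\geq c$ and $k\leq 2c+1$ one has $\dim C_i + \dim C_j \leq 2(k-c-1) < k$, so three pairwise disjoint centers of the required dimensions exist generically in $\mathbb{P}^k$; disjointness is not what fails, and no amount of ``rigidity/splitting'' analysis of maps $C_i\to\mathbb{P}^2$ is known to close this. The paper's actual mechanism is entirely different and does not pass through any classification of smooth determinantal varieties (which, as you note, is unavailable for $c\geq 4$): the reduction of Corollary \ref{N-mat-cor} forces a zero pattern on $N_{\mathcal{X}}$ ($n_{c+2,1}=n_{c+1,2}=0$, and $n_{c,3}=0$ when $k=2c$), so the linear space $L$ cut out by $n_{11}=\dots=n_{c+1,1}=n_{c+2,2}=n_{c+2,3}=0$ has codimension at most $c+3$, hence dimension at least $\dim\mathcal{X}-3\geq 0$ in every remaining case, and lies on $\mathcal{X}$. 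A direct computation of the gradients $\nabla f_{ijh}$ of the maximal minors at a point of $L$, using Pl\"ucker relations for the rank-two $(c+1)\times 2$ submatrix, shows the Jacobian has rank at most $c-1$ there. Thus the critical locus is singular along $L$ in all cases beyond the three listed, uniformly in $c$. Without this (or an equivalent concrete argument) your proof of ``only if'' is incomplete.

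The ``if'' direction also has a gap you should not paper over. Because of the forced zeros just described, it is not enough to ``designate the first $c+1$ components of $Q_i(\mathbf{X})$ so as to reproduce the $i$-th column of $N$'': you must first show that the given variety admits a determinantal representation already in the constrained shape (a zero in position $(c+2,1)$, one in $(c+1,2)$, and one in $(c,3)$ when $k=2c$). For the cubic surface this is exactly the existence of a Cayley--Salmon equation $L_1L_2L_3+M_1M_2M_3=0$; for the plane cubic one restricts that construction to a general plane; for the Bordiga surface it is Theorem 5.1 of \cite{bnt1}, which requires choosing the ten points $Z$ to contain the unit points so that $N_X$ inherits the zero pattern from $N_Z$ via (\ref{NZNB}). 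These are the substantive inputs of the forward direction, and your proposal omits them.
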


\begin{proof} From the previous discussion, it follows that $ h_1 = h_2 = c$, and $ h_3 = c + \varepsilon $ for $ k = 2c + \varepsilon$, $ \varepsilon = 0 $ or $ 1$. Let us consider general projections $ P_i, Q_i : \mathbb{P}^k \to \mathbb{P}^{h_i} $ for $ i=1,2,3$.

At first, we compute the matrix $ N_{\mathcal{X}} $ as in Corollary \ref{N-mat-cor}, and so we get $$ N_{\mathcal{X}} = \left( \begin{array}{ccc} Q_{11}(X) & 0 & 0 \\ \vdots \\ Q_{1,c+1}(X) & 0 & 0 \\ 0 & Q_{21}(X) & 0 \end{array} \right) - A C^{-1} \left( \begin{array}{ccc} 0 & Q_{22}(X) & 0 \\ \vdots \\ 0 & Q_{2,c+1}(X) & 0 \\ 0 & 0 & Q_{31}(X) \\ \vdots \\ 0 & 0 & Q_{3,h_3+1}(X) \end{array} \right).$$ If we multiply the matrices above, and perform elementary operations on the rows, we get the matrix $ N = (n_{ij}) $ such that: $(i)$ its $ j$-th column depend on $ Q_j $ only, for $ j=1,2,3$; $(ii)$ $ n_{c+2,1} = n_{c+1,2} = 0$. In the case $ h_3 = c$, it is possible to perform elementary operations of the rows of $ N $ so that $(iii)$ $ n_{c,3} = 0$.

It follows that the critical locus is actually a codimension $ c $ scheme whose defining ideal is generated by the $ 3 \times 3 $ minors of a matrix with type $ (c+2) \times 3 $ of linear forms.

It is known that the defining equation of every smooth cubic surface $ X \subseteq \mathbb{P}^3 $ can be written as $ L_1L_2L_3 + M_1M_2M_3 = 0 $ for suitable linear forms $ L_i, M_i$, $ i=1,2,3$. An equation of this kind for the cubic surface is called Cayley--Salmon. We remark that, for a given surface, there are $120$ different Cayley--Salmon equations that define it (see \cite{hlv}). The lines defined by $ L_i = M_j = 0 $ are contained in the cubic surface $ X$. The Cayley--Salmon equation is the locus where $$ N_X = \left( \begin{array}{ccc} L_1 & M_2 & 0 \\ M_1 & 0 & L_3 \\ 0 & L_2 & M_3 \end{array} \right) $$ drops rank. If we add scalar multiples of the first two columns to the third one, we get a matrix $ N_X $ that verifies constrains $ (i), (ii) $ above, and the linear forms on the third column are linearly independent. If we compare matrices $ N_{\mathcal{X}} $ and $ N_X$ under the simplifying assumption that $$ (E \vert F) = -\left( \begin{array}{cccc} 1 & 1 & 0 & 0 \\ 0 & 0 & 1 & 0 \\ -e_{31} & 0 & 0 & 1 \end{array} \right),$$ we get $ Q_{11} = L_1, Q_{12} = M_1, Q_{21} = e_{31} M_2 + L_2, Q_{22} = M_2$, and $ Q_{3j} = n_{j3} $ for $ j=1,2,3$, and so we obtain the projections $ Q_1, Q_2, Q_3$. If we choose a general invertible matrix $ C $ of order $ 4$, we get $ A = (E \vert F) C$, and so we obtain also the projections $ P_1, P_2, P_3$.

We have then proven that every smooth cubic surface is the critical locus for two triples of projections from $ \mathbb{P}^3 $ to $ \mathbb{P}^1, \mathbb{P}^1, \mathbb{P}^2$, as claimed.

A plane cubic curve is obtained as a section of a smooth cubic surface with a general plane. Hence, the argument above shows also that every plane cubic curve is critical for two triples of projections from $ \mathbb{P}^2 $ to $ \mathbb{P}^1$, as claimed. We remark that, for plane cubic curves, one has to start from the Cayley--Salmon equation and does not have to perform further elementary operations on the columns of the matrix.

The Bordiga surface has been considered from the point of view of critical loci in \cite{bnt1}. In that paper, the authors proved that the critical locus for two triples of projections from $ \mathbb{P}^4 $ to $ \mathbb{P}^2 $ is in the irreducible component of the Hilbert scheme containing the Bordiga surface as general element (\cite{bnt1}, Proposition 5.1), and conversely, that every Bordiga surface $ X $ is actually critical for two suitable triples of projections (\cite{bnt1}, Theorem 5.1). The key point of the proof of Theorem 5.1 is that, if the unit points in $ \mathbb{P}^2 $ are in $ Z$, then the matrix $ N_X $ fulfils the constraints $(i), (ii), (iii)$ above, since it is related to $ N_Z $ in equation (\ref{NZNB}).

To complete the proof, we'll prove that the critical locus $ \mathcal{X} $ is never smooth in the remaining cases.

To this end, we consider a general critical locus $ \mathcal{X}$, its associated matrix $ N_{\mathcal{X}} $ as obtained at the beginning of the proof, and we take the codimension $ c+3 $ linear space $ L \subseteq \mathcal{X} $ defined by $ n_{11} = \dots = n_{c+1,1} = n_{c+2,2} = n_{c+2,3} = 0$. As $ \mathcal{X} $ is at least $ 3$--dimensional in the cases we are considering, $ L $ is not empty. To prove that the points in $ L $ are singular for $ \mathcal{X}$, we evaluate the Jacobian matrix at them. Without loss of generality, we can make a change of coordinates, so that $ n_{i,1} = x_i $ for $ i=1, \dots, c+1$, $ n_{c+2,2} = x_{c+2} $ and $ n_{c+2,3} = x_{c+3}$. Furthermore, we assume that, at a point in $ L$, the rank of the $ (c+1) \times 2 $ matrix obtained by removing the first column and the last row in $ N_{\mathcal{X}} $ is $ 2$. In the case this does not hold, the point is singular by general properties of determinantal varieties. To simplify notation, we denote $ (i_1i_2) $ the determinant of the minor of the above matrix obtained by taking rows $ i_1 $ and $ i_2$. Let $ f_{ijh} $ be the determinant of the submatrix of $ N_{\mathcal{X}} $ obtained by taking rows $ i, j, h $ with $ 1 \leq i < j < h \leq c+2$. The derivative of $ f_{ijh} $ with respect to any variable is the sum of $ 3 $ determinants, two columns of which are from $ N_{\mathcal{X}} $ and the third column is the derivative of the corresponding column in $ N_{\mathcal{X}}$. We have to evaluate the derivatives at $ P \in L$. If $ h = c+2$, then the gradient of $ f_{ij,c+2} $ at $ P $ is the null matrix, as it is easy to check. If $ h \leq c+1$, we get $$ \nabla f_{ijh}(P) = (jh) \vec{e}_i - (ih) \vec{e}_j + (ij) \vec{e}_h $$ where $ \vec{e}_k $ is the $ k$--th element of the canonical basis. Without loss of generality, we can assume $ (12) \not= 0$, equivalent to the rank two assumption. The matrices $ \nabla f_{123}(P), \dots,$ $ \nabla f_{12,c+1}(P) $ are linearly independent, since $ (12) I_{c-1} $ is a submatrix of the Jacobian matrix corresponding to the above generators. Let us consider now $ f_{ijh} $ with $ 2 < j < h \leq c+1$ and $ i=1 $ or $ i=2$. We have $$ (12) \nabla f_{ijh}(P) - (ij) \nabla f_{12h}(P) + (ih) \nabla f_{12j}(P) = [(12)(jh) - (1j)(2h) + (1h)(2j)] \vec{e}_i = 0 $$ because the equation in square brackets is a Pl\"ucker relation that holds for rank $ 2 $ matrices of type $ (c+1) \times 2 $ for every $ c \geq 3$. Finally, we consider $ f_{ijh} $ with $ 2 < i < j < h \leq c+1$. We have \begin{equation*} \begin{split} (12) & \nabla f_{ijh}(P) - (ij) \nabla f_{12h} + (ih) \nabla f_{12j}(P) - (jh) \nabla f_{12i}(P) =\\ = -[(2i)(jh)& - (2j)(ih) + (2h)(ij)] \vec{e}_1 + [(1i)(jh) - (1j)(ih) + (1h)(ij)] \vec{e}_2 = 0 \end{split} \end{equation*} because we get Pl\"ucker relations once more. Hence, $ \nabla f_{ijh}(P) $ is in the span of $ \nabla f_{123}(P),$ $ \dots, \nabla f_{12,c+1}(P) $ for every $ 1 \leq i < j < h \leq c+2$, and so the Jacobian matrix has rank $ c-1 $ at most at every $ P \in L$. This proves that every $ P \in L $ is singular for the critical locus $ \mathcal{X}$, and so the proof is complete.
\end{proof}


\section{The $ n = 4 $ view case}
\label{n4}
According to Theorem 2.2, when we have $ 4 $ views, the codimension of the critical locus is $ 1$, otherwise the critical locus is either not irreducible or singular. In such a case, $ h_1 + h_2 + h_3 + h_4 = k+1$. On the other hand, a degree $ 4$, determinantal hypersurface is singular if embedded in $ \mathbb{P}^k $ with $ k \geq 4$. Hence, the only possible case is $ k=3$, and $ h_i = 1 $ for every $ i=1, \dots, 4$. As previously said, this case is not of interest for the Computer Vision community, and we insert it for sake of completeness from a geometrical point of view.

The study of quartic determinantal surfaces in $ \mathbb{P}^3 $ is a classical topic, and we briefly recall the main results (see \cite{jessop-book} for more results on the subject).

Quartic surfaces in $ \mathbb{P}^3 $ are parameterized by points in $ \mathbb{P}^{34}$. It is known that the general quartic surface in $ \mathbb{P}^3 $ is not determinantal, and that the locus of determinantal ones is a divisor in $ \mathbb{P}^{34}$. Determinantal quartic surfaces are characterized as the ones that contain a non--hyperelliptic curve $ C $ of degree $ 6 $ and genus $ 3$ (see, e.g., \cite{beauville}). Such a curve is also called Schur's sextic.

Moreover, a general quartic surface does not contain any line. It is known that not ruled quartic surfaces can contain any number of lines in the range $ 1 $ to $ 52$, or $ 54, 56, 60, 64$ lines. In \cite{room2, room3, room4}, the author studied quartic determinantal surfaces containing one or two lines. In particular, if $ N $ is an order $ 4 $ matrix of linear forms in $ \mathbb{P}^3$ whose determinant is the defining equation of the quartic surface $ S$, and $ \ell \subset S $ is a line, then, up to elementary operations on rows and columns of $ N$, the linear forms defining $ \ell $ are either in a row or column of $ N$, or in a $ 3 \times 2 $ or $ 2 \times 3 $ submatrix of $ N$.

Now we discuss the connections between quartic determinantal surfaces containing lines and the reconstruction problem in Computer Vision.
\begin{Prop} \label{7-1}
Let $ P_i, Q_i: \mathbb{P}^3 \to \mathbb{P}^1, i = 1, \dots, 4$, be two $4$--tuples of projections. Then, in the general case, the associated critical locus is a smooth quartic determinantal surface.
\end{Prop}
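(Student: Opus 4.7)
With $n=4$ and $h_i=1$ for every $i$, the matrix $M$ defined in (\ref{matrix-M}) has size $(4+4)\times(4+4)=8\times 8$, so by Corollary \ref{N-mat-cor} the associated matrix $N_{\mathcal{X}}$ is an order-$4$ square matrix of linear forms in the coordinates of $\mathbb{P}^3$, with $I(\mathcal{X})=(\det N_{\mathcal{X}})$. Hence $\mathcal{X}$ is automatically a determinantal quartic hypersurface, and the substantive point is smoothness. I would begin by making the structure of $N_{\mathcal{X}}$ explicit: partitioning the eight rows of $M$ so that exactly one row of each pair $(P_j,Q_j(X))$ is placed in the bottom block $C\mid D$, the matrices $B$ and $D$ become diagonal, with diagonal entries respectively a top and a bottom component of $Q_j(X)$. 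Consequently, the $j$-th column of $N_{\mathcal{X}}=B-AC^{-1}D$ takes the form $Q_{j,t}(X)\,e_j - Q_{j,b}(X)\,v_j$, where $v_j\in\mathbb{C}^4$ is the $j$-th column of $AC^{-1}$, a generic vector for general $(P_i,Q_i)$.

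For any square matrix $N$ of size $r$ with linear entries, the Jacobi identity $\partial_i\det N=\operatorname{tr}(\operatorname{adj}(N)\,\partial_i N)$ shows that every singular point of the hypersurface $\{\det N=0\}$ lies on the rank $\leq r-2$ locus, up to an exceptional subset that is empty for generic $N$. In our setting the rank $\leq 2$ locus of $N_{\mathcal{X}}$ is cut out by the sixteen $3\times 3$ minors, which are cubics in $\mathbb{P}^3$, and has expected codimension $(4-2)(4-2)=4$; hence for general projections one expects emptiness in $\mathbb{P}^3$, and consequently smoothness of $\mathcal{X}$. The plan is to verify this by two complementary observations: first, along each center $C_j=\{Q_j(X)=0\}$ only the $j$-th column of $N_{\mathcal{X}}$ vanishes, and by the two generality assumptions the three remaining columns restrict to linearly independent vectors of linear forms, so $\operatorname{rk}N_{\mathcal{X}}=3$ everywhere on $\bigcup_j C_j$; second, by exhibiting an explicit pair of $4$-tuples of projections — realizing the four $C_j$ as pairwise skew lines in a standard configuration and the $v_j$ as generic vectors — one checks by direct computation that the $3\times 3$ minors have no common zero outside $\bigcup_j C_j$ either.

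The main obstacle is precisely the latter verification: the entries of $N_{\mathcal{X}}$ are not arbitrary linear forms (the $j$-th column depends only on $Q_j$), so one cannot invoke the ``generic matrix of linear forms'' principle directly and must confirm that the restricted family still achieves the expected codimension of the rank-drop locus. Once one explicit smooth example is produced, the openness of the smoothness condition in the parameter space of $(P_i,Q_i)$ propagates smoothness to the general case, and the proposition follows.
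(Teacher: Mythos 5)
Your proposal follows essentially the same route as the paper: both first use the block structure of $M$ (with a choice of rows making $C$ invertible) to see that $N_{\mathcal{X}}$ is an order-$4$ matrix of linear forms whose determinant cuts out a quartic surface, and both then settle smoothness by producing one explicit (random) smooth example and invoking openness of smoothness in the parameter space of projections. Your added discussion of the rank $\leq 2$ locus and its expected codimension is a reasonable heuristic supplement, and your diagonal choice of $B$ and $D$ differs only cosmetically from the paper's partition, so the arguments are essentially identical.
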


\begin{proof} The matrix $ M $ associated to the two $4$--tuples of projections is described in equation (\ref{matrix-M}), and is a square matrix of order $ 8$. From $ M$, we get matrix $ N_{\mathcal{X}} = B - A C^{-1} D$ of order $ 4 $ whose determinant defines the critical locus $ \mathcal{X}$. In the considered case, matrices $ B, D$ are $$ B = \left( \begin{array}{cccc} Q_{11}(X) & 0 & 0 & 0 \\ Q_{12}(X) & 0 & 0 & 0 \\ 0 & Q_{21}(X) & 0 & 0 \\ 0 & Q_{22}(X) & 0 & 0 \end{array} \right), \qquad D = \left( \begin{array}{cccc} 0 & 0 & Q_{31}(X) & 0 \\ 0 & 0 & Q_{32}(X) & 0 \\ 0 & 0 & 0 & Q_{41}(X) \\ 0 & 0 & 0 & Q_{42}(X) \end{array} \right).$$ Then, the first two columns of $ N_{\mathcal{X}} $ are the first two columns of $ B$, while the last two columns of $ N_{\mathcal{X}} $ depend of the two non--zero columns of $ D$. Then, the critical locus is a quartic determinantal surface. When computing $ \mathcal{X} $ in a random case, we get a smooth surface, and so the general critical locus is smooth.
\end{proof}

\begin{rem}
In the notation of \cite{room2}, the four lines, centers of projections $ Q_1, \dots, Q_4$, are of type $4''$.
\end{rem}

Now we highlight a geometrical property of such critical loci.
\begin{Prop}
In the same hypotheses as above, the critical locus contains twisted cubic curves meeting three of the four lines at two points.
\end{Prop}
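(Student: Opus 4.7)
The plan is to drop one of the four lines, say $\ell_4$, and find a twisted cubic on $\mathcal X$ meeting the other three. I will form the $4\times 3$ submatrix $N'$ of $N_{\mathcal X}$ obtained by removing the fourth column, and let $W=V(I_3(N'))$ be the scheme defined by its $3\times 3$ minors. Since wherever $N'$ has rank $\leq 2$ the full matrix $N_{\mathcal X}$ has rank $\leq 3$, we have $W\subseteq\mathcal X$. The key computation, which uses the explicit block form of $N_{\mathcal X}$ recorded in the proof of Proposition \ref{7-1}, is that the four $3\times 3$ minors of $N'$ factor as $Q_{1j}A$ and $Q_{2j}B$, $j=1,2$, where
\[
A=Q_{21}v_4-Q_{22}v_3,\qquad B=Q_{11}v_2-Q_{12}v_1
\]
are quadrics; here $v_1,\dots,v_4$ are the entries of the third column of $N_{\mathcal X}$, which all lie in the ideal $(Q_{31},Q_{32})$ of $\ell_3$. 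Consequently $V(A)\supseteq\ell_2\cup\ell_3$ and $V(B)\supseteq\ell_1\cup\ell_3$.

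From this factorization, $V(I(W))=(V(A)\cup\ell_1)\cap(V(B)\cup\ell_2)$, which, using $\ell_2\subseteq V(A)$, $\ell_1\subseteq V(B)$ and the generic skewness $\ell_1\cap\ell_2=\emptyset$, collapses to $W=\ell_1\cup\ell_2\cup(V(A)\cap V(B))$. For generic projections, $V(A)$ and $V(B)$ are smooth distinct irreducible quadrics, so $V(A)\cap V(B)$ has degree $4$ by B\'ezout; since both quadrics contain $\ell_3$, the residual curve $\Gamma$ has degree $3$. A check that $\Gamma$ is irreducible and non-degenerate (best done by producing one explicit numerical example) identifies $\Gamma$ as a smooth twisted cubic lying on $\mathcal X$.

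It remains to count the intersections $\Gamma\cap\ell_i$. For $i=1$, genericity gives $\ell_1\not\subseteq V(A)$, so by B\'ezout $\ell_1\cap V(A)$ consists of two points; these lie in $V(A)\cap V(B)=\ell_3\cup\Gamma$, and, since $\ell_1\cap\ell_3=\emptyset$, they lie on $\Gamma$. The symmetric argument yields $|\Gamma\cap\ell_2|=2$. For $i=3$, I would view $\Gamma$ as a divisor on the smooth quadric $V(A)\cong\mathbb{P}^1\times\mathbb{P}^1$, on which the skew lines $\ell_2$ and $\ell_3$ belong to the same ruling; from $\Gamma\cdot\ell_2=2$ one deduces that $\Gamma$ has bidegree $(1,2)$ on $V(A)$, so $\Gamma\cdot\ell_3=2$ as well. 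Running the argument after deleting each of the other three columns of $N_{\mathcal X}$ produces the remaining twisted cubics. The main technical obstacle is the cascade of genericity checks: that $V(A)$ and $V(B)$ are smooth distinct irreducible quadrics, that $\ell_1\not\subseteq V(A)$ and $\ell_2\not\subseteq V(B)$, and that the residual curve $\Gamma$ is a smooth non-degenerate cubic rather than a degeneration such as a line plus a conic; each of these is an open condition on the space of projection data and can be confirmed on a single explicit example, after which it extends to the general case.
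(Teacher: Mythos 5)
Your proof is correct and follows essentially the same route as the paper's: both delete the fourth column of $N_{\mathcal X}$, observe that the maximal minors of the resulting $4\times 3$ submatrix cut out the three centers $\ell_1,\ell_2,\ell_3$ together with a residual twisted cubic, and then show each of the three lines meets that cubic in two points. The only cosmetic difference is that the paper writes the residual cubic down explicitly as the $2\times 2$ minors of a $2\times 3$ matrix (built using the adjugate of the change-of-generators matrix for $\ell_3$) and counts intersections by noting that each line's generators kill two of the three generating quadrics, whereas you realize the cubic as the residual of $\ell_3$ in the $(2,2)$ complete intersection $V(A)\cap V(B)$ and count via B\'ezout plus a bidegree computation on the quadric.
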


\begin{proof} Let $ N_{\mathcal{X}} $ be the matrix constructed in the proof of Proposition \ref{7-1}, and let $ N'$ its sumatrix consisting of the first three columns. The maximal minors of $ N'$ define a Schur curve containing the centers of projections $ Q_1, Q_2, Q_3$ as components. To fix notation, we set $$ N' = \left( \begin{array}{ccc} Q_{11}(X) & 0 & n'_{13} \\ Q_{12}(X) & 0 & n'_{23} \\ 0 & Q_{21}(X) & n'_{33} \\ 0 & Q_{22}(X) & n'_{43} \end{array} \right) $$ where the center of $ Q_1 $ is the line $ Q_{11}(X) = Q_{12}(X) = 0$, the center of $ Q_2 $ is the line $ Q_{21}(X) = Q_{22}(X) = 0 $ and the center of $ Q_3 $ is $ n'_{13} = n'_{23} = 0$, also defined by $ n'_{33} = n'_{43} = 0$. The two couples of generators of the third line are related by the equation $$ \left( \begin{array}{c} n'_{33} \\ n'_{43} \end{array} \right) = \left( \begin{array}{cc} a_{11} & a_{12} \\ a_{21} & a_{22} \end{array} \right) \left( \begin{array}{c} n'_{13} \\ n'_{23} \end{array} \right) $$ where $ A = (a_{ij}) $ is invertible. The residual curve is the twisted cubic curve defined by the $ 2 \times 2 $ minors of $$ \left( \begin{array}{c} Q_{11}(X) \\ Q_{12}(X) \end{array} \right\vert\left. \mbox{Adj}(A) \left( \begin{array}{c} Q_{21}(X) \\ Q_{22}(X) \end{array} \right) \right\vert\left. \begin{array}{c} n'_{13} \\ n'_{23} \end{array} \right),$$ as it can be checked. Each center meets the twisted cubic curve above at two points, because the generators of each line vanish two quadrics of the three that generate the twisted cubic.
\end{proof}

Now, we consider a partial converse of the above Proposition.
\begin{Prop}
Let $ \ell_1, \dots, \ell_4 \subset \mathbb{P}^3 $ be $ 4 $ lines, pairwise skew. Then there is a quartic determinantal surface $ S $ containing the $ 4 $ lines that is critical for two $4$--tuples of projections from $ \mathbb{P}^3 $ to $ \mathbb{P}^1$. The given lines are centers for the four projections $ Q_1, \dots, Q_4$.
\end{Prop}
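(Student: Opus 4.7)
The plan is to reverse-engineer the construction in Proposition \ref{7-1}. For each $i = 1, \dots, 4$, I pick two linear forms $Q_{i1}(\mathbf{X}), Q_{i2}(\mathbf{X})$ whose common zero locus is $\ell_i$, and let $Q_i:\mathbb{P}^3 \dashrightarrow \mathbb{P}^1$ be the projection with matrix $(Q_{i1}, Q_{i2})^T$, so that $\ell_i$ becomes the center of $Q_i$. I then choose four general constant $2 \times 4$ matrices $P_1, \dots, P_4$ of maximal rank as the second $4$-tuple of projections, assemble the $8 \times 8$ matrix $M = M^{P_1,\dots,P_4}_{Q_1,\dots,Q_4}$ of (\ref{matrix-M}), and decompose it into blocks $A, B, C, D$ exactly as in Proposition \ref{7-1}, with $A, B$ coming from the $P_1, P_2$ block-rows and $C, D$ from the $P_3, P_4$ block-rows. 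For generic $P_3, P_4$ the block $C$ is invertible, so Corollary \ref{N-mat-cor} gives $\mathcal{X} = \{\det N_{\mathcal{X}} = 0\}$ with $N_{\mathcal{X}} = B - A C^{-1} D$.

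The key step is to show that every $\ell_i$ lies on $\mathcal{X}$. From the explicit form of $B$ and $D$ in Proposition \ref{7-1}, for $i = 1, 2$ the $i$-th column of $D$ vanishes while the $i$-th column of $B$ has only $Q_{i1}, Q_{i2}$ among its nonzero entries; hence the $i$-th column of $N_{\mathcal{X}}$ coincides with that of $B$ and is identically zero along $\ell_i$. For $i = 3, 4$ the situation is dual: the $i$-th column of $B$ is zero, while the $i$-th column of $D$ has $Q_{i1}, Q_{i2}$ as only nonzero entries, so the $i$-th column of $N_{\mathcal{X}}$ equals $-AC^{-1}$ applied to the $i$-th column of $D$, whose entries are $\mathbb{C}$-linear combinations of $Q_{i1}, Q_{i2}$ and therefore vanish on $\ell_i$. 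In all four cases one column of $N_{\mathcal{X}}$ is identically zero on $\ell_i$, so $\det N_{\mathcal{X}}$ vanishes there and $\ell_i \subseteq \mathcal{X}$.

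It remains to show that for a generic choice of $P_1, \dots, P_4$ the surface $\mathcal{X}$ is a smooth quartic determinantal surface, which is the content of Proposition \ref{7-1}. Pairwise skewness of the $\ell_i$'s guarantees that no two centers of $Q_1, \dots, Q_4$ meet, ruling out the singularity mechanism from the lemma of Section \ref{numerical}. The main obstacle is that Proposition \ref{7-1} verifies smoothness under full genericity of all data, whereas here the $Q_i$'s are constrained by the prescribed centers. The cleanest way to close this gap is a semicontinuity argument: smoothness is an open condition on the remaining free parameters (the matrices $P_j$ together with the change-of-basis freedom in each pair $(Q_{i1}, Q_{i2})$), so it suffices to exhibit one tuple $(P_1, \dots, P_4)$ for which $\mathcal{X}$ is smooth, which can be verified explicitly on a single configuration of four pairwise skew lines.
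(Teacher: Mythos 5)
Your construction is correct and is essentially the paper's own: both rest on the observation that each column of $N_{\mathcal{X}}$ is built exclusively from the two linear forms cutting out one line, so $\det N_{\mathcal{X}}$ vanishes on all four lines — the paper simply runs the argument backwards, writing down the target matrix $N = B + ED$ first and then reconstructing $M$ by choosing a general invertible $C$ and setting $A = -EC$, whereas you go forward from general $P_j$'s. One caveat: the proposition as stated does not assert smoothness of $S$, so your final paragraph is not required; and, as written, verifying smoothness on a single configuration of lines would only yield the conclusion for a generic configuration by semicontinuity, not for the arbitrary pairwise skew lines of the hypothesis, for which you would need to exhibit a smooth member for that given configuration.
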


\begin{proof} Let $ \ell_i $ be the line defined by $ I(\ell_i) = \langle q_{i1}, q_{i2} \rangle$, where $ q_{i1}, q_{i2} $ are linearly independent linear forms in $ \mathbb{C}[x_0, \dots, x_3]$. Let us consider the matrix $$ N = \left( \begin{array}{cccc} q_{11} & 0 & e_{11} q_{31} + e_{12} q_{32} & e_{13} q_{41} + e_{14} q_{42} \\ q_{12} & 0 & e_{21} q_{31} + e_{22} q_{32} & e_{23} q_{41} + e_{24} q_{42} \\ 0 & q_{21} & e_{31} q_{31} + e_{32} q_{32} & e_{33} q_{41} + e_{34} q_{42} \\ 0 & q_{22} & e_{41} q_{31} + e_{42} q_{32} & e_{43} q_{41} + e_{44} q_{42} \end{array} \right),$$ and let $ S $ be the surface defined by $ \det(N) = 0$. We assume $ E = (e_{ij}) $ to be invertible. Hence, we can reconstruct $ M $ from $ N $ by choosing a general invertible matrix $ C $ and by setting $ A = -E C$. Hence, $ S $ is critical for suitable projections as claimed.
\end{proof}

To complete the section, we make some final remarks on quartic surfaces that contain four skew lines that explain why it is not possible to give a stronger converse of Proposition 5.1.
\begin{rem}
Given $ 4 $ pairwise skew general lines in $ \mathbb{P}^3$, the linear subspace $ V $ of $ \mathbb{C}^{35} $ containing quartic surfaces through the lines has dimension $ 15$. Here, four lines are general if they are not contained in the same quadric. Since the conditions of being determinantal and of containing lines are independent, we expect that the quartic determinantal surfaces containing the four lines are a locus of dimension $ 14$ in $ V$.

From a parameter count, quartic determinantal surfaces that are critical loci depend on the elements of $ E$. Since we can get the same quartic surface for different choices of matrix $ E$ (see the proof of Proposition 5.2), we have that the parameters are actually less than $ 16$. Algebraically, from every rows of $ E$, different from the first one, an element can be disregarded. So, the locus in $ V $ of critical surfaces has dimension $ 13$. From a geometrical point of view, a quartic determinantal surface containing $ 4 $ lines as above is critical if, and only if, it contains a twisted cubic curve meeting three of the four lines at $ 2 $ points, and not meeting the last line, for every choice of three among the four lines. Since a twisted cubic curve is uniquely determined by $ 6 $ points, we get such a curve by choosing two points of the first three lines. If we choose a second twisted cubic curve meeting all lines but the third one, we need $ 6 $ more points, two for each of the three selected lines. Once those two twisted cubics are selected, the quartic surface through the four lines is given, and it is possible to get its defining equation as determinant of a matrix as in the proof of Proposition 5.2. The parameters from which this construction depends are $ 12$ (the points on the lines), and one more because we can multiply the matrix by a scalar so that the determinant defines the same surface. Hence, we get once more a locus of dimension $ 13$.

In conclusion, we expect the critical quartic surfaces to fill a codimension $ 2 $ subset in $ V$.
\end{rem}


\end{document}